\newtheorem{theorem}{Theorem}
\newtheorem{lemma}{Lemma}
\newtheorem{corollary}{Corollary}
\newtheorem{proposition}{Proposition}
\newtheorem{definition}{Definition}
\DeclareMathOperator*{\argmin}{arg\,min}
\DeclareMathOperator{\dom}{dom}
\DeclareMathOperator{\CAT}{CAT}
\begin{document}

\title{A product of strongly quasi-nonexpansive mappings in Hadamard spaces}
\author{Wiparat Worapitpong$^{1}$}
\author{Parin Chaipunya$^{2}$}
\author{Poom Kumam$^{3}$}
\author{Sakan Termkaew$^{4}$}
\address{$^{1}$
Department of Mathematics, Faculty of Science, King Mongkut's University of Technology Thonburi, 126 Pracha Uthit Rd., Bang Mod, Thung Khru, Bangkok, 10140, Thailand.}
\email{wiparat.w@mail.kmutt.ac.th}

\address{$^{2}$  Department of Mathematics, Faculty of Science, King Mongkut's University of Technology Thonburi, 126 Pracha Uthit Rd., Bang Mod, Thung Khru, Bangkok, 10140, Thailand.}
\email{parin.cha@kmutt.ac.th}

\address{$^{3}$  Center of Excellence in Theoretical and Computational Science (TaCS-CoE), King Mongkut's University of Technology Thonburi, 126 Pracha Uthit Rd., Bang Mod, Thung Khru, Bangkok, 10140, Thailand. }
\email{poom.kum@kmutt.ac.th}
 
\address{$^{4}$ Faculty of Learning Sciences and Education (Thammasat Secondary School), Thammasat University, 99 Moo 18 Paholyothin Road, Klong Nueng, Klong Luang, Pathumthani, 12121, Thailand.}
\email{sakan.te@lsed.tu.ac.th}


%

\begin{abstract}
In this paper, we prove that the product of strongly quasi-nonexpansive $\Delta$-demiclosed mappings is also a strongly quasi-nonexpansive orbital $\Delta$-demiclosed mapping in Hadamard spaces. Additionally, we establish the $\Delta$-convergence theorem for approximating a common fixed point of infinite products of these mappings in Hadamard spaces. Our results have practical applications in convex function minimization, the minimization of the sum of finitely many convex functions, and solving the convex feasibility problem for finitely many sets in Hadamard spaces.
\end{abstract}

\maketitle
\pagestyle{myheadings}
\markboth{Worapitpong, W.; Chaipunya, P.; Kumam, P.;  Termkaew, S.}{A product of strongly quasi-nonexpansive mappings in Hadamard spaces}
\section{Introduction}

Strongly quasi-nonexpansive mappings play crucial role in convex analysis and optimization. The notion of such mappings was initiated from the work in \cite{BruckR}, where the class of strongly nonexpansive mappings was analysed as a generalized class of firmly nonexpansive mappings. This strongly nonexpansive mappings resulted to the notion of strongly quasi-nonexpansive mappings in \cite{Bruck}. It is known (see, for example, \cite[Lemma 1.1]{Bruck}) that the composition of strongly quasi-nonexpansive will be again strongly quasi-nonexpansive on Banach space provided that the intersection of their sets of fixed points is nonempty.   

A substantial member of the class of strongly quasi-nonexpansive mappings is the resolvent operator for a proper lower semi-continuous function and that of maximal monotone mappings, which are essential in the development of proximal point algorithm; a well-known method for approximating a minimum point of a convex function or estimating zeros of monotone mappings \cite{Minty,Martinet,Rockafellar}. For the special case of indicator function over a closed convex set $C$, the associated resolvent operator coincides with the metric projection onto $C$, denoted by $P_C$. This operator is substantial in analysing several problems in convex analysis, particularly in solving convex feasibility problem in the sense of finding an element of certain intersection of closed convex sets as detailed in \cite{Bauschke,Cegielski2013}. 

Considering the significance of some strongly quasi-nonexpansive mappings in addressing several problems, particularly the utilization of resolvent or metric projection operators, these mappings are recently analysed beyond linear spaces, particularly to the extended settings of geodesic spaces where there is flexibility of accommodating nonconvex problems to convex setting in the sense of Alexendrov geometry. In particular, such spaces have proven to be substantial frameworks for the optimization analysis associated with the harmonic mappings as in \cite{Jost1}. For further details concerning this framework and its substantial geometry, refer to Bridson and Haefliger \cite{bridson1999metric}, Burago et. al \cite{burago2001course} and Jost \cite{Jost2}.

Composition of strongly quasi-nonexpansive mappings is highly substantial not only within the linear spaces but also in spaces  with Alexandrov geometry. For instance, Ba$\check{\text{c}}$\'{a}k, Searston, and Sims  proved  that the sequence $\lbrace (P_{A}P_{B})^n x \rbrace$ $\Delta$-converges to a point in $A \cap B$ whenever $A$ and $B$ are closed convex subsets of complete $\CAT(0)$ spaces (see \cite{BacakA}, Theorem 4.1). Moreover, the resolvent operator was analysed in complete CAT(0) spaces by Ba$\check{\text{c}}$\'{a}k  \cite{Bacak2013proximal} to solve convex optimization through some weak convergence form of the bounded sequence generated by the proximal point algorithm (Theorem 1.4 in \cite{Bacak2013proximal}). Later the apporoach is extended to the case when the function is the finite sum of proper lower semi-continuous functions (Theorem 6.3.1 in \cite{Bacak2014Convex}). This approach is applicable to solving problem associated with mean in metric phylogenetics tree \cite{Bacak2014Computing}. Moreover, the approach provide more substantial ground for the investigation of finite composition of strongly quasi-nonexpansive mappings. 

Recently, Ariza, L\'{o}pez, and Nicolae \cite{Ariza2015asymptotic} analyzed the asymptotic behaviour of compositions of finite firmly nonexpansive mappings through asymptotic regularity in the sense of Browder and Petryshyn \cite{BrowPet} under the uniform convexity assumptions. Infact, the authors obtained weak convergence of the iterates of the product of metric projections as the consequence of managing the product of a finite number of
firmly nonexpansive operators. Following this development,  Reich and Salinas \cite{Reich2016Weak} extended the work of Ba$\check{\text{c}}$\'{a}k, Searston, and Sims \cite{BacakA} to a countable product  of  firmly of metric projections and established weak convergence of the sequence generated therefrom.

In \cite{Khatibzadeh2020iterations}, the authors considered the sequence of strongly quasi-nonexpansive mappings in Hadamard spaces and provide convergence results for the sequence generated therefrom to a fixed point of the underlying operator. The authors established some substantial applications for the sequence of strongly quasi-nonexpansive mappings in utilization of fixed point iterative techniques, equilibrium problem, and minimization problem involving geodesically convex or  pseudoconvex functions. Thereafter, Termkeaw et al., \cite{termkaew2023infinite} studied the convergence of the product of a strongly quasi-nonexpansive orbital $\Delta$-demiclosed mapping and showed its applicability in convex optimization and convex feasibility problems. For further rescent development with regard to a closer notion of composition of family of strongly quasi-nonexpansive mappings in geodesic spaces, refer to \cite{Berdellima} and the references therein. 

Motivated by the aforementioned results, we establish certain properties for finite composition of strongly quasi-nonexpansive mappings, thereby deducing that composition of family of strongly quasi-nonexpansive mapping is also quasi-nonexpansive mapping and the set of fixed points of the resulted composed operator coincides with the intersection of the sets of fixed points of the individual mappings. Moreover, when each of the mapping is orbital $\Delta$-demiclosed the resulted composed mapping is also orbital $\Delta$-demiclosed. Furthermore, we establish
$\Delta$-convergence of the sequence of Picard iteration to a fixed point of the underlying operator when its orbital $\Delta$-demiclosed. This result is applicable to the common fixed point of finite family of such operators following the established properties for the finite composition. The results are utilize to solve some convex feasibility and common minimization problems in CAT(0) spaces.

The paper is organized as follows. In Section 2: Preliminaries, we present the background materials necessary for the establishment of the main results and describing the associated concepts. In Section 3: Main Results, we prove some useful properties regarding finite composition of strongly quasi-nonexpansive mappings and also establish the $\Delta$-convergence of the sequence of Picard iteration to a fixed point of the underlying mapping when its orbital $\Delta$-demiclosed mappings in Hadamard spaces. In Section 4: Applications, we explore potential applications of the result of results in Section 3 to solve some convex feasibility and common minimizer problems in Hadamard spaces. Finally, in Section 5: Conclusions, we provide the conclusion.

\section{Preliminaries}

In this section, some basic notions and useful lemmas necessary for the subsequent results are given. Throughout this paper, the set of all real numbers is denoted by $\mathbb{R}$.

Let $(X,d)$ be a metric space and $x,y \in X$. A {\it geodesic path} joining $x$ and $y$ is a mapping $\gamma : [0, 1]\rightarrow X$ such that $\gamma(0)=x$, $\gamma(1)=y$ and
$d(\gamma(t),\gamma(s))=d(x,y)|t-s|$ for any $t,s \in [0,1]$. We say that $X$ is a {\it (uniquely) geodesic metric space} if any two elements $x,y \in X$ are connected by a (unique) geodesic. We write $[x,y] := \gamma_{x,y}=\gamma([0,1])$ to denote the {\it geodesic segment} of $\gamma$ whenever $X$ is uniquely geodesic containing $x,y$, and $\gamma$ is the geodesic path joining $x$ to $y$. In this case, we also use the notation $(1-t) x\oplus t y := \gamma(t) = \gamma_{t}$ for any $t \in [0,1]$. 

In Uniquely geodesic space  $(X,d)$, a {\it geodesic triangle} with vertices $x,y,z \in X$, denoted by $\Delta (x,y,z)$, is defined by $[x,y] \cup [y,z] \cup [z,x]$. For a triangle $\Delta (x_1,x_2,x_3)$ {in a geodesic space} $X$, we can find the \textit{comparison triangle} $\Delta (\overline{x}_{1},\overline{x}_{2},\overline{x}_{3})$ in $\mathbb{E}^2$ such that $d(x_i,x_j) = \| \overline{x}_i - \overline{x}_j\|$ for all $i,j \in \{ 1,2,3 \}$. Let $\gamma$ be the geodesic joining $x_{i}$ to $x_{j}$ and $u := \gamma(t)$ for some $t \in [0,1]$. Then a point $\overline{u} := \overline{\gamma}(t)$ is called the {\it comparison point} of $u$, where $\overline{\gamma}: t\mapsto (1-t)\overline{x}_i + t \overline{x}_j$ is the geodesic joining $\overline{x}_{i}$ to $\overline{x}_{j}$. If every two points $p,q$ in $\Delta (x_1,x_2,x_3)$ and their comparison points $\overline{p},\overline{q}$ in $\Delta (\overline{x}_{1},\overline{x}_{2},\overline{x}_{3})$ satisfy 
$$d(p,q)\leq \| \overline{p} - \overline{q} \|,$$
then $X$ is called a $\CAT(0)$ space. A complete $\CAT(0)$ space is called a \textit{Hadamard space}.

Let $(X,d)$ be a uniquely geodesic metric space and $ f : X \rightarrow (-\infty,+\infty]$. A subset $C\subset X$ is called {\it convex} if $[x,y] \subset C$ for all $x,y \in C$. The  {\it effective domain} of $f$ is defined by $\dom f := \{ x \in X \,|\, f(x) < +\infty \}$. If $\dom f$ is nonempty, we say that $f$ is \textit{proper}. A function $f$ is said to be \textit{lower semicontinuous} (shortly, lsc) if $f(u) \leq \liminf_{n \rightarrow \infty} f(u_{n})$ whenever $u \in X$ and $\lim_{n \rightarrow \infty} u_{n} = u$. In addition, $f$ is said to be a {\it convex} if 
\[
f((1-t)x\oplus ty) \leq (1-t)f(x) + t f(y)
\]
holds for all $x,y \in X$ and $t \in (0,1)$.

Let $(X,d)$ be a Hadamard space and $\lbrace x_n \rbrace$ be a bounded sequence of $X$. The \textit{asymptotic center} $\mathcal{A} ( \lbrace x_n \rbrace )$ of sequence $\lbrace x_n \rbrace$ is defined by
$$ \mathcal{A} ( \lbrace x_n \rbrace ) = \left\lbrace z \in X \, | \, \limsup_{n \to \infty} d(z,x_n) = \underset{y \in X}{\inf} \limsup_{n \to \infty} d(y,x_n) \right\rbrace. $$
A well-known result implies that $\mathcal{A} ( \lbrace x_n \rbrace )$ is a singleton in Hadamard spaces \cite[Proposition 7]{Dhompongsa2006}. 
The sequence $\lbrace x_n \rbrace$ is said to be \textit{$\Delta$-convergent} to an element $z$ of $X$ if
$$ \mathcal{A} ( \lbrace x_{n_i} \rbrace ) = \lbrace z \rbrace, $$
for each subsequence $\lbrace x_{n_i} \rbrace $ of $ \lbrace x_n \rbrace $. In this case, we say that $z$ is the \textit{$\Delta$-limit} of $\lbrace x_n \rbrace$. If the sequence $\lbrace x_n \rbrace$ is convergent to $z$, then it is $\Delta$-convergent to $z$. It is worth noting that the definition of $\Delta$-convergence is given for bounded sequences. Moreover, if the sequence $\lbrace x_n \rbrace$ is $\Delta$-convergent to $z$, then every subsequence  $\lbrace x_{n_i} \rbrace$ of $\lbrace x_n \rbrace$ is $\Delta$-convergent to $z$ \cite{kirk2008concept}. This notion of $\Delta$-convergence is attributed to 
Lim \cite{LimRemark}, initially in general metric spaces, and later specialized to CAT(0) settings by {Kirk and Panyanak \cite{kirk2008concept}}. It turns out that several properties of weak convergence are extended to the $\Delta$-convergence settings in CAT(0) spaces. A subset $C \subset X$ is called \textit{$\Delta$-closed} if $y \in C$ whenever $y$ is a $\Delta$-limit of some sequence $\lbrace y_n \rbrace$ of $C$. In the sequel, we use $\omega_{\Delta}(\lbrace x_n \rbrace)$ to denote the set of all $\bar{z} \in X$ whenever $\bar{z}$ is a $\Delta$-limit of some subsequence $\lbrace x_{n_i} \rbrace$ of $\lbrace x_n \rbrace$.

Let $(X, d)$ be a Hadamard space and $C$ be a nonempty, closed, and convex subset of $X$. Suppose that $S: C \to C$ is a mapping and $Fix(S):= \bigl\{ x \in C | S(x) = x\bigr\}.$  The mapping $S$ is said to be nonexpansive if $ d\big(S(x), S(y)\big) \leq d(x, y),$ for all $ x, y \in C$. The mapping $S$ is said to be quasi-nonexpansive if $Fix(S) \neq \emptyset$ and $d(S(x), q) \leq d(x,q),$ for any $x \in C,$ and $q \in Fix(S).$ 
The mapping $S$ is called strongly quasi-nonexpansive if $S$ is quasi-nonexpansive and $\lim\limits_{n \to \infty} d\left(x_n, S(x_n)\right) =0$, whenever $\{ x_n\}$ is a bounded sequence in $C$ such that $\lim\limits_{n \to \infty} \Bigl[d(x_n,q) - d\left(S(x_n),q\right) \Bigr]= 0$, for some $q \in Fix(S)$.  Moreover, $S$ is said to be $\Delta$-demiclosed in the sense of \cite{Kimura} if for any $\Delta$-convergent sequence $\lbrace x_n \rbrace$ in $X$, its $\Delta$-limit belongs to $Fix(S)$ whenever $\underset{n \rightarrow \infty}{\lim} d(x_n,S(x_n)) = 0$.

Let $(X,d)$ be a Hadamard space and $C$ be a nonempty closed convex subset of $X$. The metric projection $P_C : X \rightarrow C$ is defined by 
$$P_C(x)= \underset{y \in C}{ \argmin } \, d(x,y),$$ 
for every $x\in X$. Then for all $x \in X$, there exists a unique $\bar{x} \in C$ such that $P_C (x) = \bar{x}$. It is known that $Fix(P_C) =C$ and
$$ d(y,P_C x) \leq d(y,x), $$
for every $y \in C$ and $x \in X$ (see \cite[Proposition 3.5]{Espinola2009410}). In particular, the metric projection onto a nonempty closed and convex subset of a Hadamard space is strongly quasi-nonexpansive is $\Delta$-demiclosed (see \cite{Espinola2009410,Kimura,Ariza}).

Following the works of Jost \cite{Jost1,Jost2} and Mayer \cite{Mayer1998Gradient}, the resolvent $R_{\lambda,f}$ of a proper lower semicontinuous convex function $f$ of a Hadamard space $X$ into $(-\infty,\infty]$ with respect to $\lambda > 0$ is defined by
\[
R_{\lambda, f} (x) = \argmin_{y \in X} \left\lbrace f(y) + \frac{1}{2 \lambda} d(x,y)^2 \right\rbrace,
\]
for all $x \in X$. This mapping is well-defined as a single-valued mapping. It is a strongly quasi-nonexpansive that is $\Delta$-demiclosed mapping such that $Fix(R_{\lambda,f}) = \argmin_{X} f$ (see \cite{Khatibzadeh2020iterations,Espinola2009410,Ariza}).

Recently, Termkeaw et al. \cite{termkaew2023infinite} defined a new mapping, which is a generalization of $\Delta$-demiclosed in $\CAT (1)$ space. We particularized this notion to Hadamard spaces serving as a generalization of $\Delta$-demiclosedness.

\begin{definition}
	Let $(X,d)$ be a Hadamard space and let $S$ be a mapping from $X$ to $X$. The mapping $S$ is called {orbital $\Delta$-demiclosed} if $x^\ast \in$ {$Fix(S)$} whenever $\lbrace x_k \rbrace$ is a sequence in $X$ satisfying the following conditions:
	\begin{enumerate}
		\item $x_k \overset{\Delta}{\rightarrow} x^*$
		\item $d(S(x_k) , x_k ) \rightarrow 0$ and
		\item $(x_k)$ is a subsequence of $\lbrace  S^n (\hat{x}) \rbrace$ for some $\hat{x} \in X$.
	\end{enumerate}
\end{definition}
It is clear to see that every $\Delta$-demiclosed operator is also orbital $\Delta$-demiclosed.\\

The following lemmas are important for our main results. 

\begin{lemma}\cite{kirk2008concept} \label{kirk-lemma} 
     Let $(X,d)$ be a Hadamard space. Every bounded sequence in $X$ has a $\Delta$-convergent subsequence.
\end{lemma}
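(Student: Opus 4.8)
The plan is to combine the already‑cited fact that the asymptotic centre of any bounded sequence in a Hadamard space is a singleton with the elementary convexity inequality available in $\CAT(0)$ spaces: for the midpoint $m=\tfrac12 x\oplus\tfrac12 y$ one has $d(z,m)^2\le\tfrac12 d(z,x)^2+\tfrac12 d(z,y)^2-\tfrac14 d(x,y)^2$ for every $z\in X$ (this drops out of the comparison inequality by a direct computation in the Euclidean comparison triangle). For a bounded sequence $\{y_n\}$ write $r(\{y_n\}):=\inf_{y\in X}\limsup_{n\to\infty}d(y,y_n)$ for its asymptotic radius. Given the bounded sequence $\{x_n\}$, the first and main step is to extract a \emph{regular} subsequence, i.e.\ a subsequence $\{u_n\}$ for which $r(\{u_{n_k}\})=r(\{u_n\})$ for every further subsequence $\{u_{n_k}\}$.

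To build such a subsequence I would run a diagonal argument on infimal asymptotic radii. Put $\rho_0:=\inf\{r(\sigma):\sigma\text{ a subsequence of }\{x_n\}\}$ and choose a subsequence $\sigma^{(1)}$ with $r(\sigma^{(1)})<\rho_0+1$; inductively, given $\sigma^{(j)}$, set $\rho_j:=\inf\{r(\tau):\tau\text{ a subsequence of }\sigma^{(j)}\}$ and choose a subsequence $\sigma^{(j+1)}$ of $\sigma^{(j)}$ with $r(\sigma^{(j+1)})<\rho_j+2^{-j}$. Since shrinking a sequence can only raise the infimal radius, $(\rho_j)$ is nondecreasing, while the choices force $\rho_{j-1}\le\rho_j\le r(\sigma^{(j)})<\rho_{j-1}+2^{-(j-1)}$, so $(\rho_j)$ and $(r(\sigma^{(j)}))$ converge to a common limit $r\ge 0$. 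Let $\{u_n\}$ be the diagonal subsequence whose $n$‑th term is taken from $\sigma^{(n)}$ with indices arranged to increase; then for each fixed $j$ the tail $\{u_j,u_{j+1},\dots\}$ is a subsequence of $\sigma^{(j)}$. Using this, together with the fact that the asymptotic radius depends only on the tail, one checks that every subsequence $\tau$ of $\{u_n\}$ satisfies $\rho_j\le r(\tau)\le r(\{u_n\})\le r(\sigma^{(j)})$ for all $j$, hence $r(\tau)=r$; thus $\{u_n\}$ is regular. Making this diagonalisation airtight is the principal technical obstacle; everything afterwards is geometry.

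Finally, let $\{u\}=\mathcal{A}(\{u_n\})$, so that $\limsup_n d(u,u_n)=r$, and I claim $\{u_n\}$ is $\Delta$‑convergent to $u$. Take any subsequence $\{u_{n_k}\}$ and let $\{v\}=\mathcal{A}(\{u_{n_k}\})$. By regularity the asymptotic radius of $\{u_{n_k}\}$ equals $r$, so $\limsup_k d(v,u_{n_k})=r$ and, for $m:=\tfrac12 u\oplus\tfrac12 v$, $\limsup_k d(m,u_{n_k})\ge r$; moreover $\limsup_k d(u,u_{n_k})\le\limsup_n d(u,u_n)=r$. Applying the convexity inequality with $z=u_{n_k}$, $x=u$, $y=v$, then squaring, taking $\limsup_k$, and using subadditivity of $\limsup$ gives $r^2\le\limsup_k d(m,u_{n_k})^2\le\tfrac12 r^2+\tfrac12 r^2-\tfrac14 d(u,v)^2=r^2-\tfrac14 d(u,v)^2$, which forces $d(u,v)=0$, i.e.\ $v=u$. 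Hence $\mathcal{A}(\{u_{n_k}\})=\{u\}$ for every subsequence $\{u_{n_k}\}$, which is exactly the statement that $\{u_n\}$ is $\Delta$‑convergent to $u$, and the lemma follows.
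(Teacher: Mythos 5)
Your proposal is correct. The paper offers no proof of this lemma --- it is quoted verbatim from Kirk and Panyanak \cite{kirk2008concept} --- and your argument is essentially the standard one underlying that reference: extract a regular subsequence by a diagonal argument on infimal asymptotic radii, then use the $\CAT(0)$ midpoint (Bruhat--Tits) inequality together with uniqueness of asymptotic centres to show that a regular sequence $\Delta$-converges to its asymptotic centre. All the steps check out, including the tail-monotonicity facts $\rho_j \le r(\tau) \le r(\{u_n\}) \le r(\sigma^{(j)})$ that make the diagonalisation work.
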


\begin{lemma}\cite{dhompongsa2008convergence}  \label{dhompongsa-lemma}
    Let $(X,d)$ be a Hadamard space. If  $\{x_n\}$ is a bounded sequence in $X$ with $ \mathcal{A}(\{x_n\}) = \{x\}$ and $\{u_n\}$ is a subsequence of $\{x_n\}$ with $\mathcal{A}(\{u_n\}) = \{ u \}$ and the sequence $\{d(x_n, u) \}$ converges, then $x = u.$
\end{lemma}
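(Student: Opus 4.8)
The plan is to argue by contradiction. Suppose $x\neq u$. Since $X$ is a Hadamard space it is uniquely geodesic, so the midpoint $m$ of the segment $[x,u]$ is well defined, and I will show that $\limsup_n d(m,x_n)$ is strictly smaller than $\limsup_n d(x,x_n)$ --- which contradicts the fact that $x$ is the unique point realising $\inf_{y\in X}\limsup_n d(y,x_n)$.

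First I would record the monotonicity facts. Write $r:=\limsup_n d(x,x_n)=\inf_{y\in X}\limsup_n d(y,x_n)$, so $\limsup_n d(y,x_n)\ge r$ for all $y$. Since $\{u_n\}$ is a subsequence of $\{x_n\}$, $\limsup_n d(y,u_n)\le\limsup_n d(y,x_n)$ for every $y$. The hypothesis that $\{d(x_n,u)\}$ converges forces its subsequence $\{d(u_n,u)\}$ to converge to the same value $L:=\lim_n d(x_n,u)$, so $\limsup_n d(u_n,u)=L$; since $u$ realises $\inf_y\limsup_n d(y,u_n)$ and $\{u_n\}$ is a subsequence of $\{x_n\}$, this gives $L=\limsup_n d(u_n,u)\le\limsup_n d(u_n,x)\le\limsup_n d(x_n,x)=r$.

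The key step is the CN (Bruhat--Tits) inequality, valid in any $\CAT(0)$ space and deduced from the comparison-triangle definition together with the parallelogram identity in $\mathbb{E}^2$: for the midpoint $m$ of $[x,u]$ and any $z\in X$,
\[
d(m,z)^2\le\tfrac12 d(x,z)^2+\tfrac12 d(u,z)^2-\tfrac14 d(x,u)^2 .
\]
Taking $z=x_n$ and passing to the limit superior --- using subadditivity of $\limsup$, the identity $\limsup_n d(x,x_n)^2=r^2$, and $\limsup_n d(u,x_n)^2=L^2$ (because $d(u,x_n)\to L$) --- yields
\[
\limsup_n d(m,x_n)^2\le\tfrac12 r^2+\tfrac12 L^2-\tfrac14 d(x,u)^2\le r^2-\tfrac14 d(x,u)^2 ,
\]
where the last step uses $L\le r$. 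Since $x\neq u$ gives $d(x,u)>0$, we obtain $\limsup_n d(m,x_n)<r$, contradicting $\limsup_n d(m,x_n)\ge r$. Hence $x=u$.

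The only delicate point, and the one I would watch most carefully, is the passage to the limit superior: one must bound $\limsup$ of a sum by the sum of the $\limsup$'s and, crucially, replace $\limsup_n d(u,x_n)$ by the genuine limit $L$ so that no slack is lost in the final estimate. Finiteness of all the quantities involved is ensured by boundedness of $\{x_n\}$; the remainder is routine $\CAT(0)$ convexity bookkeeping.
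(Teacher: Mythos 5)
Your argument is correct, and it is worth noting that the paper itself gives no proof of this lemma at all: it is quoted verbatim from Dhompongsa--Panyanak \cite{dhompongsa2008convergence}. The proof in that reference is a four-link chain of inequalities that treats uniqueness of asymptotic centers as a black box: assuming $x\neq u$, one writes
$\limsup_n d(u_n,u) < \limsup_n d(u_n,x) \le \limsup_n d(x_n,x) < \limsup_n d(x_n,u) = \limsup_n d(u_n,u)$,
where the two strict inequalities come from $u$ and $x$ being the \emph{unique} asymptotic centers of $\{u_n\}$ and $\{x_n\}$ respectively, the middle inequality from $\{u_n\}$ being a subsequence, and the final equality from the convergence of $\{d(x_n,u)\}$. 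What you do instead is unfold that uniqueness into the argument: you re-derive the needed strictness by taking the midpoint $m$ of $[x,u]$ and applying the (CN) inequality, obtaining $\limsup_n d(m,x_n)^2 \le \tfrac12 r^2+\tfrac12 L^2-\tfrac14 d(x,u)^2 < r^2$ and hence a contradiction with the minimality of $r$. Your bookkeeping is sound: $L\le r$ follows correctly from the asymptotic-center property of $u$ for $\{u_n\}$ together with the subsequence inequality, and since $d(u,x_n)\to L$ the limit superior of the sum splits exactly, so no slack is lost. The trade-off is that the reference's proof is shorter but leans on uniqueness of asymptotic centers (itself proved via the same (CN) inequality), whereas yours is self-contained modulo (CN) and makes visible exactly where the quantitative convexity of $\CAT(0)$ spaces and the hypothesis that $\{d(x_n,u)\}$ converges enter; in particular your version only uses that $x$ \emph{is} an asymptotic center of $\{x_n\}$ and that $u$ minimizes $y\mapsto\limsup_n d(y,u_n)$, not the uniqueness of either.
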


\section{Main result}

In this section, we establish that finite product of strongly quasi-nonexpansive mappings is a strongly  quasi-nonexpansive, and the set of fixed points of the resulted mapping coincides with the intersection of the sets of fixed points of the individual mappings. Moreover, when each of the mapping is $\Delta$-demiclosed the resulted mapping is orbital $\Delta$-demiclosed. Additionally, we prove
$\Delta$-convergence of the sequence of Picard iteration to a fixed point of the underlying operator when its orbital $\Delta$-demiclosed. This allows the application of common fixed point of finite family of such mappings. Firstly, we introduce the following lemmas, which serve as crucial tools in our main results. 

\begin{lemma} \label{lem1}
Let $(X, d)$ be a Hadamard space. If $S_i : X \to X$ is a strongly quasi-nonexpansive mapping for all $i \in \{1, . . . ,m\}$ and $\bigcap\limits_{i=1}^m Fix(S_i) \neq \emptyset,$ then \[Fix(S_m S_{m-1} \cdots S_1) = \bigcap\limits_{i=1}^m Fix(S_i).\]
\end{lemma}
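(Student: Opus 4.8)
The plan is to establish the two inclusions separately, the first being trivial. For $\bigcap_{i=1}^m Fix(S_i) \subseteq Fix(S_m S_{m-1}\cdots S_1)$: if $x$ satisfies $S_i x = x$ for every $i \in \{1,\dots,m\}$, then applying $S_1, S_2, \dots, S_m$ in succession leaves $x$ unchanged, so $S_m S_{m-1}\cdots S_1 x = x$.

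For the reverse inclusion, fix some $q \in \bigcap_{i=1}^m Fix(S_i)$ — available by the hypothesis that this intersection is nonempty — and let $x \in Fix(S_m S_{m-1}\cdots S_1)$. Set $T_0 x := x$ and $T_k x := S_k S_{k-1}\cdots S_1 x$ for $1 \le k \le m$. Since each $S_i$ is quasi-nonexpansive and $q$ is a common fixed point, one has $d(S_i u, q) \le d(u,q)$ for all $u \in X$, which chained along the composition gives
\[
d(x,q) = d(T_m x, q) \le d(T_{m-1} x, q) \le \cdots \le d(T_1 x, q) \le d(T_0 x, q) = d(x,q).
\]
Therefore every inequality is an equality, i.e.\ $d(T_k x, q) = d(x,q)$ for all $k = 0, 1, \dots, m$.

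Next I would argue by induction on $k$ that $S_k x = x$. Assume $S_1 x = \cdots = S_{k-1} x = x$ (vacuous when $k = 1$); then $T_{k-1} x = x$, hence $T_k x = S_k(T_{k-1} x) = S_k x$, and the equality obtained above yields $d(S_k x, q) = d(x, q)$. Now apply the strong quasi-nonexpansiveness of $S_k$ to the constant — hence bounded — sequence $x_n \equiv x$: since $d(x_n, q) - d(S_k x_n, q) = d(x,q) - d(S_k x, q) = 0$ for all $n$, we obtain $\lim_{n\to\infty} d(x_n, S_k x_n) = d(x, S_k x) = 0$, so $S_k x = x$. This closes the induction and gives $x \in \bigcap_{i=1}^m Fix(S_i)$, completing the proof.

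The argument is elementary, and I expect no serious obstacle; the one point deserving emphasis is that quasi-nonexpansiveness alone produces merely the telescoping equality $d(T_k x, q) = d(x,q)$ and cannot force $S_k x = x$ (rotations give counterexamples in the linear setting) — it is precisely the \emph{strong} quasi-nonexpansiveness, tested against the constant sequence, that upgrades "the distance to $q$ is preserved" to "the point is a fixed point." The remaining bookkeeping (the chained estimate and the induction) is routine, and in fact the Hadamard structure of $X$ is not used in this lemma.
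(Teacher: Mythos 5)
Your proof is correct and rests on the same key mechanism as the paper's: quasi-nonexpansiveness collapses the telescoping chain of distances to a common fixed point $q$ into equalities, and strong quasi-nonexpansiveness applied to a constant sequence then upgrades $d(S_k u,q)=d(u,q)$ to $S_k u = u$. The only difference is organizational --- the paper inducts on $m$, peeling off the outermost mapping $S_m$ and reducing to the composition of $m-1$ mappings, whereas you establish all the intermediate equalities in one chain and induct forward from $S_1$; both routes are sound.
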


\begin{proof}
First, to show $\bigcap\limits_{i=1}^m Fix(S_i) \subset Fix(S_m S_{m-1} \cdots S_1),$  let $p \in \bigcap\limits_{i=1}^m Fix(S_i)$. 
It follow that 
\begin{align*}
S_m S_{m-1} \cdots S_1(p)& = S_m S_{m-1} \cdots S_2(p)\\
&=S_m S_{m-1} \cdots S_3(p)\\
&~\vdots \\
&= S_m(p)=p
\end{align*}
Thus, we obtain  $p \in Fix(S_m S_{m-1} \cdots S_1).$  Therefore $ \bigcap\limits_{i=1}^m Fix(S_i) \subset Fix(S_m S_{m-1} \cdots S_1). $ 

For the converse, we proceed to prove 
\begin{equation}\label{eq.lem3.3}
Fix(S_m S_{m-1} \cdots S_1) \subset \bigcap\limits_{i=1}^m Fix(S_i) 
\end{equation} 
by induction. For the case of $m=2$, let $p \in Fix(S_2S_1)$ and $S_1(p)=x \in X$.  Then $p = S_2S_1(p) = S_2(x).$
As $S_1, S_2$ are quasi-nonexpansive, then for $q \in Fix(S_1)\cap Fix(S_2)$, we have \[d(p,q)=d(S_2(x),q) \leq d(x,q) \quad \text{and}\quad d(x,q)= d(S_1(p),q) \leq d(p,q).\] Thus, $d(S_2(x),q)=d(x,q).$ 
Then, $\lim_{n \to \infty} \bigl[ d(S_2(x),q) - d(x,q)\bigr] = 0.$ Since $S_2$ is strongly quasi-nonexpansive, so we have 
\[ \lim_{n \to \infty} d(S_2(x),x)=0  \quad \text{and} \quad S_2(x) = x.\]
Therefore \,$p=S_2S_1(p)=S_2(x)=x=S_1(p).$ 
This mean that $p \in Fix(S_1)$ and $p \in Fix(S_2)$. Thus, we have $Fix(S_2S_1) \subset Fix~(S_1) \cap Fix(S_2).$

Next, suppose that the equation (\ref{eq.lem3.3}) holds for case $m-1$. 
Let $p \in Fix(S_m \cdots S_1)$ and $S_{m-1} \cdots S_1(p)=:x \in X.$ Then, 
\[S_m(S_{m-1} \cdots S_1(p)) = S_m (x) = p.\] 
Since $S_i$ is quasi-nonexpansive, for all $i = \{1, ..., m\}$, then for $q \in \bigcap\limits_{i=1}^m Fix(S_i)$, we have  
\[
d(p,q)=d(S_m(x),q) \leq d(x,q) = d(S_{m-1} \cdots S_1(p),q),
\]
which $d(S_{m-1} \cdots S_1(p),q) \leq d(S_{m-2} \cdots S_1(p),q) \leq \cdots \leq d(S_1(p),q) \leq d(p,q).$ Hence $d(S_m(x),q)=d(x,q).$ Thus, $\lim_{n \to \infty} \bigl[ d(S_m(x),q) - d(x,q)\bigr]=0.$ 
Since $S_m$ is strongly quasi-nonexpansive. Then, \[\lim_{n \to \infty}d(S_m(x),x) = d(S_m(x),x) = 0, \, S_m(x)=x\] and 
\[ p=S_m\cdots S_1(p)=S_m(x)=x=S_{m-1}\cdots S_1(p) .\] 
This implies that $p \in Fix(S_m)$ and $p \in Fix(S_{m-1} \cdots S_1)$. By induction holds for $m-1$, we have $\bigcap\limits_{i=1}^{m-1} Fix(S_i)$. 
Therefore, $Fix(S_{m}  \cdots S_1) \subset \bigcap\limits_{i=1}^{m} Fix(S_i).$ This completes the prove.
\end{proof}

\vspace{2mm}

\begin{lemma} \label{lem2}
Let $(X, d)$ be a Hadamard space. If $S_i : X \to X$ is a strongly quasi-nonexpansive mapping for all $i \in \{1, . . . ,m\}$ and $\bigcap\limits_{i=1}^m Fix(S_i) \neq \emptyset.$ Then, $S=S_m S_{m-1} \cdots S_1$ is a strongly quasi-nonexpansive mapping.
\end{lemma}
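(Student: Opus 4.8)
The plan is to verify directly the two defining conditions of strong quasi-nonexpansivity for the product $S = S_m S_{m-1} \cdots S_1$. By Lemma \ref{lem1} we already know that $Fix(S) = \bigcap_{i=1}^m Fix(S_i)$, which is nonempty by hypothesis; fix once and for all a point $q$ in this intersection, so that $q \in Fix(S_j)$ for every $j$.

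First I would check that $S$ is quasi-nonexpansive. For an arbitrary $x \in X$, pass through the iterated images and use quasi-nonexpansivity of each factor at the common fixed point $q$ to obtain the chain
$$d(Sx,q) = d(S_m \cdots S_1 x, q) \le d(S_{m-1} \cdots S_1 x, q) \le \cdots \le d(S_1 x, q) \le d(x,q),$$
so $d(Sx,q) \le d(x,q)$ for all $x$.

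The substantive part is the strong convergence condition. Let $\{x_n\}$ be a bounded sequence in $X$ with $d(x_n,q) - d(Sx_n,q) \to 0$. I would introduce the intermediate orbits $y_n^{0} := x_n$ and $y_n^{j} := S_j y_n^{j-1}$ for $j = 1,\dots,m$, so that $y_n^{m} = Sx_n$. Quasi-nonexpansivity of each $S_j$ gives the monotone chain $d(y_n^{0},q) \ge d(y_n^{1},q) \ge \cdots \ge d(y_n^{m},q)$, so each quantity $\alpha_n^{j} := d(y_n^{j-1},q) - d(y_n^{j},q)$ is nonnegative, and $\sum_{j=1}^{m} \alpha_n^{j} = d(x_n,q) - d(Sx_n,q) \to 0$. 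Since the $\alpha_n^{j}$ are nonnegative and their sum vanishes, each one vanishes: $\alpha_n^{j} \to 0$ for every $j$. Moreover $\{y_n^{j-1}\}_n$ is bounded for each $j$, since $d(y_n^{j-1},q) \le d(x_n,q)$ and $\{x_n\}$ is bounded. Hence, applying strong quasi-nonexpansivity of $S_j$ to the bounded sequence $\{y_n^{j-1}\}_n$, which satisfies $d(y_n^{j-1},q) - d(S_j y_n^{j-1},q) = \alpha_n^{j} \to 0$, yields $d(y_n^{j-1}, y_n^{j}) = d(y_n^{j-1}, S_j y_n^{j-1}) \to 0$ for every $j \in \{1,\dots,m\}$. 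The triangle inequality then closes the argument:
$$d(x_n, Sx_n) = d(y_n^{0}, y_n^{m}) \le \sum_{j=1}^{m} d(y_n^{j-1}, y_n^{j}) \to 0.$$

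The only delicate point is the bookkeeping in the previous paragraph: one must observe that all intermediate orbits remain bounded and that the nonnegative partial gaps $\alpha_n^{j}$ individually tend to $0$ before the strong quasi-nonexpansivity of the factors can be invoked one at a time. An equivalent organization is an induction on $m$, splitting $S_m \cdots S_1 = S_m \circ (S_{m-1}\cdots S_1)$ and using Lemma \ref{lem1} to identify $Fix(S_{m-1}\cdots S_1)$, but the direct telescoping above seems the cleanest route.
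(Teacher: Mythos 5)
Your proposal is correct and follows essentially the same route as the paper: identify $Fix(S)$ via Lemma \ref{lem1}, show the telescoping chain $d(Sx_n,q)\le\cdots\le d(S_1x_n,q)\le d(x_n,q)$, deduce that each intermediate gap vanishes, apply strong quasi-nonexpansivity of each factor to the (bounded) intermediate orbits, and finish with the triangle inequality. Your explicit bookkeeping with the nonnegative gaps $\alpha_n^{j}$ summing to zero is in fact a cleaner statement of the step the paper handles somewhat implicitly in its general case, so there is nothing to fix.
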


\begin{proof}
We proceed to prove this lemma by mathematical induction. For the case of $m=2$, assume that $S_1$ and $S_2$ are strongly quasi-nonexpansive mapping such that $Fix(S_1) \cap Fix(S_2) \neq \emptyset.$   
Let $S:=S_2 S_1$, by Lemma \ref{lem1}, we have $Fix(S)=Fix(S_2S_1)= Fix(S_1)\cap Fix(S_2)$. Let $\{x_n\}$ be a bounded sequence in $X$ such that $\lim\limits_{n \to \infty} \bigl[d(x_n,y) - d\left(S(x_n),y\right) \bigr]= 0$, for all $y \in Fix(S)$. 

The first part is to show that $S$ is quasi-nonexpansive. 
We take $x \in X$ and $y \in Fix(S).$ By Lemma \ref{lem1}, $y \in Fix(S_1)\cap Fix(S_2)$, we have 
\[0 \leq d(S(x),y)=d(S_2S_1(x),y) \leq  d(S_1(x),y) \leq d(x,y).\]
That is $d(S(x),y) \leq d(x,y)$. Thus, $S$ is quasi-nonexpansive mappings. \\

The second part is to show that $S=S_m S_{m-1} \cdots S_1$ is strongly quasi-nonexpansive. \\
Since $S_1$ and $S_2$ are strongly quasi-nonexpansive, we obtain 
\begin{equation}\label{eq1} 
   0 \leq d(S(x_n),y)=d(S_2S_1(x_n),y) \leq  d(S_1(x_n),y) \leq d(x_n,y) < \infty. 
\end{equation}
Moreover, we have  $-d(x_n,y) \leq - d(S_1(x_n),y) \leq -d(S(x_n),y)$. Then, we get \[0=d(x_n,y)-d(x_n,y) \leq d(x_n,y)- d(S_1(x_n),y) \leq d(x_n,y)-d(S(x_n),y).\]
Take limit as $n \to \infty$, then we obtain $\lim\limits_{n \to \infty} \bigl[d(x_n,y) - d\left(S_1(x_n),y\right) \bigr]= 0.$ 
\\
Next, we add $- d(S(x_n),y)$ in to inequality (\ref{eq1}); we obtain \[0 \leq d(S_1(x_n),y)-d(S(x_n),y) \leq d(x_n,y)-d(S(x_n),y).\] 
Take limit as $n \to \infty$, we have that $\lim\limits_{n \to \infty} \bigl[d(S_1(x_n),y) - d\left(S(x_n),y \right) \bigr]= 0$, that is 
\begin{equation}\label{eq3.3}
\lim\limits_{n \to \infty} \Bigl[d(S_1(x_n),y) - d\big(S_2S_1(x_n),y \big) \Bigr]= 0. 
\end{equation}
From inequality (\ref{eq1}), $d(S_1(x_n),y) \leq d(x_n,y)$ and $\{x_n\}$ is bounded, it follows that $\{ S_1(x_n) \}$ is bounded.
Then by the triangle inequality, we have 
\begin{equation}\label{eq3.4}
0 \leq d(S(x_n),x_n)=d(S_2S_1(x_n),x_n)\leq d(S_2S_1(x_n),S_1(x_n)) +d(S_1(x_n),x_n). 
\end{equation}

Since $\{x_n\}$ is bounded and by inequality (\ref{eq1}), then $\lim_{n \to \infty}d(S_1(x_n),x_n) = 0.$  
Since $\{S_1(x_n)\}$ is bounded and by inequality (\ref{eq3.3}), then $\lim_{n \to \infty}d(S_2(S_1(x_n)),S_1(x_n)) = 0.$  \,
Take limit as $n \to \infty$ to inequality (\ref{eq3.4}), we obtain $\lim\limits_{n \to \infty} d\bigl(S(x_n),x_n\bigr) = 0$. Therefore $S=S_2S_1$ is strongly quasi-nonexpansive. \\

Next, for case $m>2$, suppose that $S_i$ is quasi-nonexpansive mapping for $i =\{1, ..., m\}$ and $ \bigcap\limits_{i=1}^m Fix(S_i) \neq \emptyset.$ 
Assume that $S_i$ is strongly quasi-nonexpansive mapping for all $i =\{1, ..., m\}$, and the product $S_{m-1} \cdots S_1$ is strongly quasi-nonexpansive. 
Let $S:=S_{m} \cdots S_1$ and $\{ x_n\}$ be a bounded sequence such that $\lim\limits_{n \to \infty} \bigl[d(x_n,y) - d\left(S(x_n),y \right) \bigr]= 0$, for some $y \in Fix(S).$ 

The first part of the case $m >2$\, is to show that  $S=S_m S_{m-1} \cdots S_1$ is quasi-nonexpansive mapping. 
For $x \in X$ and $y \in Fix(S)$, by Lemma \ref{lem1}, $Fix(S)=Fix(S_m S_{m-1} \cdots S_1)=  \bigcap\limits_{i=1}^{m} Fix(S_i).$ Then we have $ d(S(x),y) \leq d(x,y)$. Thus, the composition $S=S_m S_{m-1} \cdots S_1$ is quasi-nonexpansive mappings.

The second part is to show that $S$ is strongly quasi-nonexpansive, that is to shows that $\lim\limits_{n \to \infty} d\bigl(S(x_n),x_n\bigr) = 0$. 
Since $S_i$ is quasi-nonexpansive for each $i=\{1, ... , m\}$, and $\{ x_n\}$ is a bounded sequence, we have  
 \begin{align}\label{eq2}
   0 \, \leq \, d(S(x_n),y) & = \, d(S_m S_{m-1} \cdots S_1(x_n),y) \nonumber\\
                              & \leq  \, d(S_{m-1} \cdots S_1(x_n),y) \, \leq  \, \cdots \leq d(S_1(x_n),y) \nonumber\\
                              & \leq  \, d(x_n,y). 
 \end{align}
Moreover, we obtain  $-d(x_n,y) \leq - d(S_1(x_n),y) \leq \cdots
 \leq -d(S(x_n),y)$. Then, we get 
 \begin{eqnarray}
    0=d(x_n,y)-d(x_n,y)  & \leq & d(x_n,y)- d(S_1(x_n),y)\nonumber\\
    &\leq & d(x_n,y)-d(S_2S_1(x_n),y).\nonumber\\
    &\vdots & \nonumber\\
    &\leq & d(x_n,y)-d(S_{m-1} \cdots S_1(x_n),y)\nonumber\\
    &\leq & d(x_n,y)-d(S_m \cdots S_1(x_n),y)\nonumber\\
    &= & d(x_n,y)-d(S(x_n),y). 
\end{eqnarray}
Take limit as $n \to \infty$, and\, $\lim\limits_{n \to \infty} \bigl[d(x_n,y) - d\left(S(x_n),y \right) \bigr]= 0$.  Then, we obtain 
\[\lim\limits_{n \to \infty} \bigl[d(x_n,y) - d\left(S_{k} \cdots S_1(x_n),y \right) \bigr]= 0,\] 
for all~$k = 1,2,\dots,m$.
Since $S_i$ is strongly quasi-nonexpansive for each $i=\{1,...,m\}$ and $\{ x_n\}$ is bounded, it follow that $\{ S_i(x_n) \}$ is bounded for all $i= \{1, ..., m\}$. 
Then by the triangle inequality, we have 
\begin{eqnarray}
  \quad 0 &\leq& d(S(x_n),x_n) \nonumber\\
  &=& d(S_m\cdots S_1(x_n),x_n)\nonumber\\
  &\leq & d\left( S_m S_{m-1} \cdots S_1(x_n), S_{m-1} \cdots S_1(x_n)\right)\nonumber\\
  & & + d\left( S_{m-1} S_{m-2}\cdots S_1(x_n),S_{m-2} \cdots S_1(x_n)\right) + \cdots \nonumber\\
  & & + d\left( S_2 S_1(x_n),S_1(x_n) \right)+d( S_1(x_n), x_n )
\end{eqnarray}
Take limit as \,$n \to \infty$, and\, by the limits above, we obtain $\lim\limits_{n \to \infty} d\bigl(S(x_n),x_n\bigr) = 0$. Hence, $S=S_m S_{m-1} \cdots S_1$  is strongly quasi-nonexpansive. This completes the prove. 
\end{proof}

\vspace{3mm}
Now, we are ready to prove the main result.

\begin{proposition}  \label{prop1}
Let $(X, d)$ be a Hadamard space.  If $ S_i : X \to X $ is a strongly quasi-nonexpansive $\Delta$-demiclosed mapping for all $ i \in \{1, 2, . . . ,m\}$  and $\bigcap\limits_{i=1}^m Fix(S_i) \neq \emptyset.$  Then, $ S=S_m S_{m-1} \cdots S_1 $ is a strongly quasi-nonexpansive orbital $\Delta$-demiclosed mapping.
\end{proposition}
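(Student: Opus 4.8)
The plan is to obtain the strong quasi-nonexpansiveness of $S$ for free from Lemma~\ref{lem2}, so that only the orbital $\Delta$-demiclosedness has to be argued; in fact I will prove the stronger assertion that $S$ is $\Delta$-demiclosed, which, as observed right after Definition~2.1, already implies that $S$ is orbital $\Delta$-demiclosed (so conditions (1) and (2) of the orbital definition will suffice and condition (3) will not be needed). Fix once and for all a point $q\in\bigcap_{i=1}^m Fix(S_i)$; this set is nonempty by hypothesis and, by Lemma~\ref{lem1}, it equals $Fix(S)$. Let $\{x_k\}$ be a bounded sequence with $x_k\overset{\Delta}{\rightarrow}x^*$ and $d(S(x_k),x_k)\to 0$; the goal is to show $x^*\in Fix(S)$.

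The first step is to push the displacement estimate backwards through the composition. Since $|d(x_k,q)-d(S(x_k),q)|\le d(S(x_k),x_k)\to 0$, and since quasi-nonexpansiveness of each $S_i$ (together with $q\in\bigcap_i Fix(S_i)$) gives the chain
\[
d(S(x_k),q)\le d(S_{m-1}\cdots S_1(x_k),q)\le\cdots\le d(S_1(x_k),q)\le d(x_k,q),
\]
a squeeze argument yields $d(x_k,q)-d(S_j\cdots S_1(x_k),q)\to 0$, and hence also $d(S_{j-1}\cdots S_1(x_k),q)-d(S_j\cdots S_1(x_k),q)\to 0$, for every $j\in\{1,\dots,m\}$, with the convention that $S_0\cdots S_1$ is the identity. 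Each partial orbit $\{S_{j-1}\cdots S_1(x_k)\}_k$ is bounded, because $d(S_{j-1}\cdots S_1(x_k),q)\le d(x_k,q)$.

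Next I bootstrap through the factors one at a time. Feeding the bounded sequence $\{x_k\}$ and the point $q\in Fix(S_1)$ into the strong quasi-nonexpansiveness of $S_1$ gives $d(S_1(x_k),x_k)\to 0$; feeding $\{S_1(x_k)\}$ and $q\in Fix(S_2)$ into that of $S_2$ gives $d(S_2S_1(x_k),S_1(x_k))\to 0$; inductively, $d(S_j\cdots S_1(x_k),S_{j-1}\cdots S_1(x_k))\to 0$ for all $j$. Summing these increments along the chain and invoking the triangle inequality (exactly as in the proof of Lemma~\ref{lem2}) yields $d(S_{j-1}\cdots S_1(x_k),x_k)\to 0$ for each $j$. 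Because $\Delta$-convergence is insensitive to perturbations that vanish in the metric -- for every $z\in X$ and every subsequence one has $\limsup_i d(z,S_{j-1}\cdots S_1(x_{k_i}))=\limsup_i d(z,x_{k_i})$, so the asymptotic centres agree -- each sequence $z^{(j)}_k:=S_{j-1}\cdots S_1(x_k)$ is also $\Delta$-convergent to $x^*$. Now $z^{(j)}_k\overset{\Delta}{\rightarrow}x^*$ and $d(S_j(z^{(j)}_k),z^{(j)}_k)=d(S_j\cdots S_1(x_k),S_{j-1}\cdots S_1(x_k))\to 0$, so the $\Delta$-demiclosedness of $S_j$ forces $x^*\in Fix(S_j)$. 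Since this holds for every $j\in\{1,\dots,m\}$, we conclude $x^*\in\bigcap_{i=1}^m Fix(S_i)=Fix(S)$, as required.

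The main obstacle is the third paragraph: the hypothesis $d(S(x_k),x_k)\to 0$ does \emph{not} by itself give $d(S_j\cdots S_1(x_k),x_k)\to 0$ for intermediate $j$, and it is precisely the strong quasi-nonexpansiveness (not mere quasi-nonexpansiveness) of each individual factor, applied to the correctly chosen bounded sequence, that unlocks these intermediate estimates; without it one cannot produce the sequences $z^{(j)}_k$ on which the $\Delta$-demiclosedness of $S_j$ is tested. The auxiliary fact that metric-null perturbations preserve $\Delta$-limits is routine but deserves to be stated explicitly, since $\Delta$-convergence is not induced by a topology.
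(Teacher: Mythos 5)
Your proof is correct, and its engine is the same as the paper's: get strong quasi-nonexpansiveness from Lemma~\ref{lem2}, then walk through the factors one at a time, applying the strong quasi-nonexpansiveness of $S_j$ to the bounded intermediate sequence $\{S_{j-1}\cdots S_1(x_k)\}$ to make the increments $d\bigl(S_j\cdots S_1(x_k),S_{j-1}\cdots S_1(x_k)\bigr)$ vanish, transferring the $\Delta$-limit to that intermediate sequence, and invoking the $\Delta$-demiclosedness of $S_j$. Two of your choices, however, genuinely streamline and strengthen the argument. First, you never use condition (3) of the orbital definition, so you in fact prove that $S$ is $\Delta$-demiclosed outright, a strictly stronger conclusion than the stated orbital $\Delta$-demiclosedness; the paper invokes the orbit hypothesis only to deduce $d(x_n,y)\le d(z,y)$, i.e.\ boundedness, which $\Delta$-convergence already supplies. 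Second, by working throughout with the differences $d(x_k,q)-d(S_j\cdots S_1(x_k),q)$ rather than with the limits themselves, you avoid the paper's passage to a subsequence along which $\lim_j d(x_{n_j},y)$ and $\lim_j d(S(x_{n_j}),y)$ both exist, and so you keep the whole argument on the original sequence. Your auxiliary fact that metric-null perturbations preserve $\Delta$-limits is exactly the content of the paper's asymptotic-centre computations for the cases $i=1$, $i=2$ and the inductive step; isolating it once and reusing it for every $j$ is cleaner than repeating the $\limsup$ chain three times, and you are right to flag it explicitly since $\Delta$-convergence is not topological. The one point to make sure you state carefully in a final write-up is the justification of $d(S_{j-1}\cdots S_1(x_k),x_k)\to 0$ by summing the increments via the triangle inequality, which you do correctly and which is precisely where the strong (not merely quasi-) nonexpansiveness of each factor is consumed.
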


\begin{proof}
Suppose that $ S_i$ is a strongly quasi-nonexpansive  $\Delta$-demiclosed mapping for all $i \in \{1, 2, . . . ,m\}$ and $\bigcap\limits_{i=1}^m Fix(S_i) \neq \emptyset.$ We denote $ S=S_m S_{m-1} \cdots S_1 $, by Lemma \ref{lem2} we obtain $ S $ is a strongly quasi-nonexpansive mapping. Thus the remaining is to show that $S$ is orbital $\Delta$-demiclosed.

Since $ S_i$ is $\Delta$-demiclosed mapping for all $i \in \{1, 2, . . ., m\}$. Let $\{ x_n \}$ be a sequence in $X$ and $p$ be an element of $X$ such that 
\begin{enumerate}
    \item [(i)] $x_n \xrightarrow{\Delta} p$;
    \item [(ii)] $d(S(x_n),x_n) \to 0$;
    \item [(iii)] $\{ x_n\}$ is a subsequence $\{ S^{k_n}(z)\}$ for some $z \in X.$
\end{enumerate}

To claim that $S$ is orbital $\Delta$-demiclosed, we need to show that $p \in Fix(S)$. Since $S$ is strongly quasi-nonexpansive, then $Fix(S) = \bigcap\limits_{i=1}^m Fix(S_i) $ by Lemma \ref{lem1}, and $\bigcap\limits_{i=1}^m Fix(S_i)  \neq \emptyset$ by assumption. 
Take $y \in Fix(S).$  
Since $S$ is strongly quasi-nonexpansive by Lemma \ref{lem2}, we obtain  $d(S(x_n),y) \leq d(x_n,y). $  
By the triangle inequality, we have $d \bigl(x_n,y\bigr) \leq d\bigl(x_n, S(x_n)\bigr) + d\bigl(S(x_n),y\bigr). $  
Thus, we obtain $d \bigl(S(x_n),y\bigr) \leq  d\bigl(x_n,y\bigr) \leq d\bigl(x_n, S(x_n)\bigr) + d\bigl(S(x_n),y\bigr).$ 
Then $0 \leq d(x_n,y) - d\bigl(S(x_n),y\bigr) \leq d\bigl(x_n, S(x_n)\bigr).$  Take the limit as $n \to \infty$, by condition (ii) and the Squeeze Theorem, 
\[\lim\limits_{n \to \infty} \Bigl[d(x_n,y) - d\bigl(S(x_n),y\bigr) \Bigr]= 0. \]

Now, Using the condition (iii), there exists $z \in X$ such that $x_n = S^{k_n}(z)$, which gives 
\begin{equation} \label{eq4}
d(x_n,y) = d\bigl(S^{k_n}(z),y\bigr) \leq d\bigl(S^{k_n-1}(z),y\bigr) \leq \cdots \leq d(z, y) < \infty.   
\end{equation}
This inequality gives $d(x_n,y) \leq d(z,y)$. 
Since the sequence $\{ x_n\}$ is bounded sequence, then there exists a subsequence $\{ x_{n_j}\}$ of $\{ x_n\}$,  such that 
\begin{equation}\label{eq-P6}
\lim\limits_{j \to \infty} d(x_{n_j},y)  = \lim\limits_{j \to \infty} d\bigl(S(x_{n_j}),y \bigr).
\end{equation}
Since $S$ is strongly quasi-nonexpansive and $Fix(S) = \bigcap\limits_{i=1}^m Fix(S_i) $ by Lemma \ref{lem1}. Then we obtain 
\[d\bigl(S(x_{n_j}),y\bigr) \leq d\bigl(S_{m-1}S_{m-2}\cdots S_1 (x_{n_j}),y\bigr) \leq \cdots \leq d\bigl(S_1(x_{n_j}),y\bigr) \leq d(x_{n_j},y) \]
Take limit as $ j \to \infty$ and by equation (\ref{eq-P6}), we have 
\begin{eqnarray}\label{eq6}
    \lim\limits_{j \to \infty} d\bigl(S(x_{n_j}),y\bigr)  &  = &  \lim\limits_{j \to \infty} d\bigl(S_{m-1} \cdots  S_1(x_{n_j}),y\bigr)\nonumber\\
    &\vdots & \nonumber\\
    &= & \lim\limits_{j \to \infty} d\bigl(S_1(x_{n_j}),y\bigr) \nonumber\\
    &= & \lim\limits_{j \to \infty} d (x_{n_j},y ). 
\end{eqnarray}


 For case $i=1$, we will show that $p \in Fix(S_1).$ From equation (\ref{eq6}), we have 
  \begin{equation} 
 \lim\limits_{j \to \infty} d\bigl(S_1(x_{n_j}),y\bigr)  =  \lim\limits_{j \to \infty} d(x_{n_j},y). 
 \end{equation}
That is $  \lim\limits_{j \to \infty} d\bigl(S_1(x_{n_j}),y\bigr)-  \lim\limits_{j \to \infty} d(x_{n_j},y) = 0.$
Since $S_1$ is strongly quasi-nonexpansive, then $  \lim\limits_{j \to \infty} d\bigl(S_1(x_{n_j}),x_{n_j}\bigr) = 0.$   
Since $S_1$ is $\Delta$-demiclosed, then the $\Delta$-limit of $\{ x_{n_j}\} $ is belong to $Fix(S_1)$.
This mean that $p \in Fix(S_1).$\\

 For case $i=2$, we will show that $p \in Fix(S_2).$ From equation (\ref{eq6}), we have
  \begin{equation} 
 \lim\limits_{j \to \infty} d\bigl(S_2(S_1(x_{n_j})),y\bigr)  =  \lim\limits_{j \to \infty} d\bigl(S_1(x_{n_j}),y \bigr).
 \end{equation}
Since $S_2$ is strongly quasi-nonexpansive, then $  \lim\limits_{j \to \infty} d\bigl(S_2 S_1(x_{n_j}) ,x_{n_j}\bigr) = 0 $. 
Since $S_2$ is $\Delta$-demiclosed, then the $\Delta$-limit of $\bigl\{ S_1\bigl(x_{n_j}\bigr) \bigr\} $ is belong to $Fix(S_2)$.\\

Next, to show that $\bigl\{ S_1\bigl(x_{n_j}\bigr)\bigr\} $ is a $\Delta$-convergent to $p$ (that is to show $p \in Fix(S_2)$ ). 
Let $\{ S_1\bigl(x_{n_{j_r}}\bigr)\} $ be any subsequence of $\{ S_1\bigl(x_{n_j}\bigr)\} .$  Then\\

\noindent
\begin{align}
 \limsup\limits_{r \to \infty} d\bigl(S_1(x_{n_{j_r}}),p\bigr)
 &\leq  \limsup\limits_{r \to \infty} d\bigl(x_{n_{j_r}}),p\bigr) \nonumber\\
 &\leq  \limsup\limits_{r \to \infty} d\bigl(x_{n_{j_r}}),t\bigr),  \quad\forall t \in X\nonumber\\\
  &\leq  \limsup\limits_{r \to \infty} \Bigr[ d\bigl(x_{n_{j_r}}),S_1(x_{n_{j_r}})\bigr) + d\bigl(S_1(x_{n_{j_r}}),t\bigr)\Bigr], \quad\forall t \in X\nonumber\\
 &\leq  \limsup\limits_{r \to \infty}  d\bigl(x_{n_{j_r}}),S_1(x_{n_{j_r}})\bigr) + \limsup\limits_{r \to \infty} d\bigl(S_1(x_{n_{j_r}}),t\bigr),   \quad\forall t \in X\nonumber\\
&  \leq  \limsup\limits_{r \to \infty} d\bigl(S_1(x_{n_{j_r}}),t\bigr), \quad\forall t \in X.
\end{align}\\
This means that $\mathcal{A} \Big( \big\{ S_1 (x_{n_{j_r}})\big\}\Big)  = \{ p \}$  for each subsequence $\big\{ S_1 (x_{n_{j_r}})\big\}$ of $\big\{ S_1 (x_{n_j})\big\}.$  Thus $\big\{ S_1 (x_{n_j})\big\}$ is a $\Delta$-convergent to $p$, Therefore, $p \in Fix(S_2).$ \\

Next, in case $i = \{3, ..., m\}.$  First, we suppose that $p \in Fix(S)$ holds for $i = \{1, ..., l-1\}$ and let us consider $l =\{3, ..., m\}.$  We would like to show that $p \in Fix(S_l)$. 
By using the condition (iii), then we obtain the equation (\ref{eq4}), and equation (\ref{eq6}). Then we obtain 
\[ \lim\limits_{j \to \infty} d\bigl(S_{l}  S_{l-1} \cdots S_1(x_{n_j}),y \bigr) = \lim\limits_{j \to \infty} d\bigl(S_{l-1} \cdots S_1(x_{n_j}),y\bigr) < \infty.\]
Since $S_l$ is strongly quasi-nonexpansive, then we have  
\[
  \lim\limits_{j \to \infty} d\Bigl(S_{l} \bigl( S_{l-1} \cdots S_1(x_{n_j})\bigr), S_{l-1} \cdots S_1(x_{n_j})\Bigr) = 0.   
\]
and by $S_l$ is $\Delta$-demicolsed, the $\Delta$-limit of $\Bigr\{ S_{l-1} \cdots S_1 \bigl(x_{n_j}\bigr)\Bigr\} $ is belong to $Fix(S_l)$.\\

Next, we will show that $\Bigr\{ S_{l-1} \cdots S_1 \bigl(x_{n_j}\bigr)\Bigr\} $ is $\Delta$-convergent to $p$. \\
Let  $\Bigr\{ S_{l-1} \cdots S_1 \bigl(x_{n_{j_r}}\bigr)\Bigr\} $ be a subsequence of  $\Bigr\{ S_{l-1} \cdots S_1 \bigl(x_{n_j}\bigr)\Bigr\} .$ Then we have
\begin{align}
 \limsup\limits_{r \to \infty} d\bigl(S_{l-1} \cdots S_1(x_{n_{j_r}}),p\bigr)&~\leq  \limsup\limits_{r \to \infty} d\bigl(S_{l-2} \cdots S_1(x_{n_{j_r}}),p\bigr) \nonumber\\
 &~\leq  \limsup\limits_{r \to \infty} d\bigl(S_{l-2} \cdots S_1(x_{n_{j_r}}),t\bigr), \quad\forall t \in X\nonumber\\
  &~\leq  \limsup\limits_{r \to \infty} \Bigr[ d\bigl(S_{l-2} \cdots S_1(x_{n_{j_r}}),S_{l-1} \cdots S_1(x_{n_{j_r}})\bigr)\nonumber\\
  &~ \quad \quad + d\bigl(S_{l-1} \cdots S_1(x_{n_{j_r}}),t\bigr)\Bigr], \, \quad\forall t \in X\nonumber\\
  &~\leq  \limsup\limits_{r \to \infty} d\bigl(S_{l-2} \cdots S_1(x_{n_{j_r}}),S_{l-1} \cdots S_1(x_{n_{j_r}})\bigr)\nonumber\\
  &~ \quad \quad + \limsup\limits_{r \to \infty} d\bigl(S_{l-1} \cdots S_1(x_{n_{j_r}}),t\bigr), \,\quad\forall t \in X
\end{align}
By the assumption holds for $l-1$, then we have
\[   \lim\limits_{j \to \infty} d \bigl( S_{l-1} \cdots S_1(x_{n_j}), S_{l-2} \cdots S_1(x_{n_j})\bigr) = 0.\]
Then, we obtain
\[ \limsup\limits_{r \to \infty} d\bigl(S_{l-1} \cdots S_1(x_{n_{j_r}}),p\bigr) 
 \leq  \limsup\limits_{r \to \infty} d\bigl(S_{l-1} \cdots S_1(x_{n_{j_r}}),t\bigr), \]
 for all $  t \in X.$ Which means that $\mathcal{A} \Big( \big\{ S_{l-1} \cdots S_1 (x_{n_{j_r}})\big\}\Big)  = \{ p \}$  for each subsequence $\big\{ S_{l-1} \cdots  S_1 (x_{n_{j_r}})\big\}$ of $\big\{ S_{l-1} \cdots S_1 (x_{n_j})\big\}.$  
This implies that the sequence $\big\{ S_{l-1} \cdots S_1 (x_{n_j})\big\}$ is a $\Delta$-convergent to $p$. Therefore,  $p \in Fix(S_l).$ 
For all cases mentioned above, and $Fix(S) = \bigcap\limits_{i=1}^m Fix(S_i),$  this implies that $p \in \bigcap\limits_{i=1}^m Fix(S_i).$ In other words, $p \in Fix(S).$ Therefore, $S$ is an orbital $\Delta$-demiclosed mapping.
\end{proof}

\vspace{2mm}
Next, we establish the $\Delta$-convergence theorem for a sequence generated by a strongly quasi-nonexpansive orbital $\Delta$-demiclosed mapping in Hadamard spaces.\\


\begin{theorem}\label{thm1}
Let $(X, d) $ be a Hadamard space. If $ S: X \to X $ is a strongly quasi-nonexpansive orbital $\Delta$-demiclosed mapping, and $Fix(S) \neq \emptyset.$ If the sequence defined by $x_n := S^n(p_0)$ for each $n \in \mathbb{N}$, and for any initial point $p_0 \in X$.  Then, the sequence $x_n$ is $\Delta$-convergent to an element  $p^\ast \in Fix(S).$
\end{theorem}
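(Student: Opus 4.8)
The plan is to run the classical Fej\'er‑monotonicity argument, transplanted to the Hadamard setting. Fix any $q \in Fix(S)$, which exists by hypothesis. Since $S$ is quasi‑nonexpansive and $x_{n+1} = S(x_n)$, we get $d(x_{n+1},q) = d(S(x_n),q) \le d(x_n,q)$, so $\{d(x_n,q)\}$ is nonincreasing and bounded below, hence convergent; in particular $\{x_n\}$ is bounded. Consequently $d(x_n,q) - d(S(x_n),q) = d(x_n,q) - d(x_{n+1},q) \to 0$, and because $S$ is strongly quasi‑nonexpansive this forces the asymptotic regularity $d(x_n,S(x_n)) = d(x_n,x_{n+1}) \to 0$.

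Next I would show that every $\Delta$‑cluster point of $\{x_n\}$ lies in $Fix(S)$. Let $x^\ast \in \omega_\Delta(\{x_n\})$, witnessed by a subsequence with $x_{n_k} \xrightarrow{\Delta} x^\ast$. Then $d(S(x_{n_k}),x_{n_k}) \to 0$ by the previous paragraph, and $\{x_{n_k}\}$ is a subsequence of the orbit $\{S^n(p_0)\}$ (take $\hat{x} = p_0$). Thus all three conditions in the definition of orbital $\Delta$‑demiclosedness hold, so $x^\ast \in Fix(S)$. By Lemma \ref{kirk-lemma} the bounded sequence $\{x_n\}$ admits at least one $\Delta$‑convergent subsequence, so $\omega_\Delta(\{x_n\})$ is a nonempty subset of $Fix(S)$.

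It then remains to prove $\omega_\Delta(\{x_n\})$ is a singleton. Let $\mathcal{A}(\{x_n\}) = \{x\}$, which is well defined in a Hadamard space. Given any $u \in \omega_\Delta(\{x_n\})$, realized by a subsequence $\{u_n\}$ of $\{x_n\}$ with $\mathcal{A}(\{u_n\}) = \{u\}$, we have $u \in Fix(S)$ by the preceding step, hence $\{d(x_n,u)\}$ converges by the first paragraph; Lemma \ref{dhompongsa-lemma} then gives $u = x$. So every $\Delta$‑convergent subsequence of $\{x_n\}$ has the same $\Delta$‑limit $x$, and since (again by Lemma \ref{kirk-lemma}) every subsequence of $\{x_n\}$ has a $\Delta$‑convergent sub‑subsequence, it follows that $\mathcal{A}(\{x_{n_i}\}) = \{x\}$ for every subsequence $\{x_{n_i}\}$, i.e. $x_n \xrightarrow{\Delta} x$. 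Setting $p^\ast := x$ and invoking the second paragraph gives $p^\ast \in Fix(S)$, completing the argument.

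The steps are routine; the only point requiring care is that $\Delta$‑convergence demands the asymptotic center of \emph{every} subsequence to equal the prospective limit, so the uniqueness argument via Lemma \ref{dhompongsa-lemma} must be applied to an arbitrary subsequence rather than to one convenient witnessing subsequence. I expect the main conceptual obstacle to be precisely this passage from ``all $\Delta$‑cluster points coincide'' to ``the sequence $\Delta$‑converges''; verifying condition (iii) of orbital $\Delta$‑demiclosedness is immediate, since here $\{x_n\}$ is literally the orbit $\{S^n(p_0)\}$, and it is exactly this orbital structure — not full $\Delta$‑demiclosedness — that the theorem exploits.
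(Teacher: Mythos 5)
Your proposal is correct and follows essentially the same route as the paper's proof: Fej\'er monotonicity of $\{d(x_n,q)\}$ gives boundedness and, via strong quasi-nonexpansiveness, asymptotic regularity; orbital $\Delta$-demiclosedness (with $\hat{x}=p_0$) places every $\Delta$-cluster point in $Fix(S)$; and Lemma \ref{dhompongsa-lemma} yields uniqueness of the $\Delta$-limit. If anything, your write-up is more careful than the paper's at the final step, since you explicitly handle arbitrary subsequences when upgrading ``all $\Delta$-cluster points coincide'' to genuine $\Delta$-convergence.
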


\begin{proof}
Given, $p \in X$, and consider the sequence $\{ x_n\}$ which defined by $x_1 := p$ and $x_{n+1} := S^n(p) := S(x_n)$ for all $n \in \mathbb{N}$. 
    
    Let $y \in Fix(S)$, and since the mapping $S$ is quasi-nonexpansive. Then we obtain, 
\[
    0 \leq d(x_{n+1},y) \leq d(x_n,y) \leq \cdots \leq d(x_1,y) = d(p, y) < \infty. 
\]
    We can see that the sequence $\{d(x_n, y)\}$ is bounded and non-increasing for each $y \in Fix(S).$  Then the sequence $\{d(x_n, y)\}$ is convergent to an element of $[0, \infty)$, for each $y \in Fix(S).$  Take limit as $n \to \infty$, and by $x_{n+1}:= S(x_n)$. Then we obtain  
\[
    0 \leq  \lim\limits_{n \to \infty} d\left(S(x_n),y\right)  = \lim\limits_{n \to \infty} d(x_n,y) \leq \lim\limits_{n \to \infty} d(p, y) = d(p, y) < \infty. 
\]
Since the mapping $ S $ is strongly quasi-nonexpansive, we have
    \begin{equation} \label{eq-thm-1}\vspace{2mm}
    \lim\limits_{n \to \infty} d\bigl(x_n,S(x_n)\bigr) = 0
    \end{equation}
    
Since the sequence $\{ x_n\}$ is bounded, by Lemma \ref{kirk-lemma}, the sequence $\{ x_n\}$  has a $\Delta$-convergent subsequence. 
Now, let us take $p^\ast \in \omega_\Delta \bigl( \{ x_n\} \bigr)$ such that  $x_{n_k}$ is  $\Delta$-convergent to $p^\ast$, for some subsequence $\{ x_{n_k} \}$ of $\{ x_n\}$. 
By the assumption of $\{ x_{k+1}\}$, we have $ x_{n_k} = S^{n_k-1}$ is a subsequence. Then it follow that, by equation (\ref{eq-thm-1}), 
we have 
\[
    \lim\limits_{k \to \infty} d\bigl(x_{n_k},S(x_{n_k})\bigr) = 0. 
\]
 Since $ S $ is orbital $\Delta$-demiclosed mapping, we obtain $p^\ast$ is belong to $ Fix(S)$. Thus, $ \omega_\Delta \bigl( \{ x_n\} \bigr) \subset Fix(S)$. This implies that the sequence $\{ d(x_n,p^\ast) \}$ convergent to an element of $[0, \infty),~$ for all $p^\ast \in \omega_\Delta \bigl( \{ x_n\} \bigr)$. 
By Lemma \ref{dhompongsa-lemma}, the sequence $\{ x_n\}$ is $\Delta$-convergent to an element of $X$, and we have $ \omega_\Delta \bigl( \{ x_n\} \bigr) \subset Fix(S)$ from above. 
    Therefore, the sequence $\{ x_n\}$ is $\Delta$-convergent to an element of $Fix(S)$. This completes the prove. 
\end{proof}

\begin{corollary}\label{cor3.1}
Let $(X, d) $ be a Hadamard space. Suppose that $ S_i: X \to X $ is a strongly quasi-nonexpansive $\Delta$-demiclosed mapping, for each $i \in \{ 1, 2, ..., m\}$ and  $\bigcap\limits_{i=1}^m Fix(S_i) \neq \emptyset$. If $ S=S_m S_{m-1} \cdots S_1 $ and $x_n := S^n(p_0)$ for each $n \in \mathbb{N}$, where $p_0 \in X$,  then the sequence $\{x_n\}$ is $\Delta$-convergent to an element  $p^\ast \in  \bigcap\limits_{i=1}^m Fix(S_i).$
\end{corollary}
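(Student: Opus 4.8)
The plan is to assemble Corollary \ref{cor3.1} directly from the three ingredients already proved, with essentially no new work. First I would invoke Proposition \ref{prop1}: since each $S_i$ is strongly quasi-nonexpansive and $\Delta$-demiclosed with $\bigcap_{i=1}^m Fix(S_i) \neq \emptyset$, the composition $S = S_m S_{m-1} \cdots S_1$ is a strongly quasi-nonexpansive orbital $\Delta$-demiclosed mapping. Next I would apply Lemma \ref{lem1} to identify $Fix(S) = \bigcap_{i=1}^m Fix(S_i)$, which in particular is nonempty by hypothesis, so that $S$ satisfies the standing assumptions of Theorem \ref{thm1}.

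Then I would feed $S$ into Theorem \ref{thm1}: for the given initial point $p_0 \in X$ and the Picard iteration $x_n := S^n(p_0)$, the theorem yields that $\{x_n\}$ is $\Delta$-convergent to some $p^\ast \in Fix(S)$. Finally I would combine this with the identification $Fix(S) = \bigcap_{i=1}^m Fix(S_i)$ from Lemma \ref{lem1} to conclude $p^\ast \in \bigcap_{i=1}^m Fix(S_i)$, which is exactly the claimed conclusion.

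There is no real obstacle here; the corollary is a bookkeeping consequence of stringing together Proposition \ref{prop1}, Lemma \ref{lem1}, and Theorem \ref{thm1}. The only point that deserves a sentence of care is checking that the hypotheses of Theorem \ref{thm1} are genuinely met — specifically that $Fix(S) \neq \emptyset$, which follows because $Fix(S) = \bigcap_{i=1}^m Fix(S_i)$ by Lemma \ref{lem1} and the latter is nonempty by the standing assumption of the corollary. One might also remark that the orbital $\Delta$-demiclosedness supplied by Proposition \ref{prop1} (rather than full $\Delta$-demiclosedness, which the composition need not inherit) is precisely the weakened hypothesis under which Theorem \ref{thm1} is stated, so the chain of implications fits together without slack.
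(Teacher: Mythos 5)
Your proposal is correct and follows exactly the paper's own route: the paper's proof is a one-line appeal to Proposition \ref{prop1} and Theorem \ref{thm1}, with the identification $Fix(S)=\bigcap_{i=1}^m Fix(S_i)$ from Lemma \ref{lem1} used implicitly. Your version just makes that last bookkeeping step (and the nonemptiness check for the hypotheses of Theorem \ref{thm1}) explicit, which is fine.
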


\begin{proof}
By using Proposition \ref{prop1}, and Theorem \ref{thm1}, we can conclude the claim.
\end{proof}

\section{Applications}

In this section, we consider some practical applications of our main results. We will apply our results from the previous section to solve convex optimizations on Hadamard spaces.

\subsection{Convex Minimization Problems} 
Firstly, we will present the application of solving convex minimization problems of proper lower semi-continuous (lsc) convex functions in a Hadamard space $X$. Let $f:X \rightarrow (-\infty,\infty]$ be a proper lsc convex function and $\lambda > 0$. According to section 2, the resolvent mapping $R_{\lambda_{i},f}$ is a strongly quasi-nonexpansive $\Delta$-demiclosed mapping such that $Fix(R_{\lambda,f}) = \underset{X}{\argmin} f$ whenever $\underset{X}{\argmin} f \neq \emptyset$. We then obtain the following corollary.\\

\begin{corollary}\label{cor4.1}
    Let $(X, d)$ be a Hadamard space and $f: X \to ( -\infty, +\infty]$ be a proper lsc convex function. Let $R_{\lambda_i, f} : X \to X$ be a resolvent mapping of $f$ with respect to $\lambda_i > 0$ for all  $i \in \{ 1, 2, ..., m\}.$ 
    If  $\argmin_X f \neq \emptyset$, 
    then for any initial point  $x_0 \in X$, the sequence defined for each $n \in \mathbb{N}$ by
    \[
    x_n := (R_{\lambda_m, f} R_{\lambda_{m-1}, f} \cdots R_{\lambda_1, f} )^n(x_0)
    \]
is $\Delta$-convergent to an element $x^\ast$  in $\argmin_X f.$
\end{corollary}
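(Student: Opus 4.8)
The plan is to recognize Corollary \ref{cor4.1} as a direct specialization of Corollary \ref{cor3.1}, once we identify each resolvent $R_{\lambda_i,f}$ with the operators $S_i$ appearing there. First I would recall, as stated in Section 2, that for a proper lsc convex function $f$ on a Hadamard space and any $\lambda_i>0$, the resolvent $R_{\lambda_i,f}:X\to X$ is single-valued, strongly quasi-nonexpansive, $\Delta$-demiclosed, and satisfies $Fix(R_{\lambda_i,f})=\argmin_X f$. Setting $S_i:=R_{\lambda_i,f}$ for $i\in\{1,\dots,m\}$, the crucial observation is that all of these fixed-point sets are the \emph{same} set $\argmin_X f$, so that
\[
\bigcap_{i=1}^m Fix(S_i)=\bigcap_{i=1}^m \argmin_X f=\argmin_X f,
\]
which is nonempty precisely by the hypothesis $\argmin_X f\neq\emptyset$. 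Hence the standing assumptions of Corollary \ref{cor3.1} are satisfied.

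Next, with $S:=S_mS_{m-1}\cdots S_1=R_{\lambda_m,f}R_{\lambda_{m-1},f}\cdots R_{\lambda_1,f}$ and $x_n:=S^n(x_0)$ for an arbitrary initial point $x_0\in X$, Corollary \ref{cor3.1} yields directly that $\{x_n\}$ is $\Delta$-convergent to some $x^\ast\in\bigcap_{i=1}^m Fix(S_i)$. By the displayed identity above, this common intersection is exactly $\argmin_X f$, so $x^\ast\in\argmin_X f$, which is the assertion. For completeness one may recall the chain behind Corollary \ref{cor3.1}: Lemmas \ref{lem1} and \ref{lem2} give that $S$ is strongly quasi-nonexpansive with $Fix(S)=\bigcap_i Fix(S_i)$; Proposition \ref{prop1} upgrades this to $S$ being orbital $\Delta$-demiclosed; and Theorem \ref{thm1} then delivers $\Delta$-convergence of the Picard iterates of $S$ to a fixed point of $S$.

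There is essentially no analytic obstacle in this corollary; the only point requiring a line of care is the bookkeeping that the resolvents with possibly different parameters $\lambda_1,\dots,\lambda_m$ nevertheless share the single fixed-point set $\argmin_X f$ — this is what guarantees the nonemptiness hypothesis of Corollary \ref{cor3.1} without any extra assumption — together with the observation that $\Delta$-convergence to an element of $\bigcap_i Fix(S_i)$ is the same as $\Delta$-convergence to a minimizer of $f$. All of the substantive work has already been carried out in Lemmas \ref{lem1}--\ref{lem2}, Proposition \ref{prop1}, and Theorem \ref{thm1}.
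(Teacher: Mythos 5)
Your proof is correct and follows essentially the same route as the paper: both arguments rest on the observation that every resolvent $R_{\lambda_i,f}$ is strongly quasi-nonexpansive and $\Delta$-demiclosed with the \emph{same} fixed-point set $\argmin_X f$, so the nonemptiness hypothesis is automatic, and then Proposition \ref{prop1} and Theorem \ref{thm1} (equivalently, Corollary \ref{cor3.1}, which you cite) deliver the conclusion. The only cosmetic difference is that you invoke Corollary \ref{cor3.1} as a single packaged statement while the paper applies Proposition \ref{prop1} and Theorem \ref{thm1} separately; the content is identical.
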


\begin{proof}
    Since $R_{\lambda_i, f}$ is a strongly quasi-nonexpansive $\Delta$-demiclosed mapping for all  $i \in \{ 1, 2, ..., m\}$ such that  $\bigcap\limits_{i=1}^m  Fix \bigl( R_{\lambda_i, f} \bigr) = Fix \bigl( R_{\lambda_i, f} \bigr) = \argmin_X f \neq \emptyset $, then we have that $R_{\lambda_m, f} R_{\lambda_{m-1}, f} \cdots R_{\lambda_1, f}$ is a strongly quasi-nonexpansive orbital $\Delta$-demiclosed mapping by using Proposition  \ref{prop1}. 
    Let $x_0 \in X$, define a sequence $\{ x_n \} \in X $ by \[x_n = (R_{\lambda_m, f} R_{\lambda_{m-1}, f} \cdots R_{\lambda_1, f} )^n(x_0).\] Then the sequence $\{ x_n \} $ is $\Delta$-convergent to some point in $x^\ast \in Fix(R_{\lambda_m, f} R_{\lambda_{m-1}, f} \cdots R_{\lambda_1, f} )$, which $Fix(R_{\lambda_m, f} R_{\lambda_{m-1}, f} \cdots R_{\lambda_1, f} )  = \bigcap\limits_{i=1}^m  Fix \bigl( R_{\lambda_i, f} \bigr)$ by using Theorem \ref{thm1}. 
    Therefore, the point $x^\ast \in \argmin_X f$.
\end{proof}

In the following application, we will examine the sum of multiple loss functions. Given the objective function $f$ is defined as $f := \sum_{i=1}^m f_i$, where $f_i$ is a proper lsc convex function for all $i \in \{ 1,2,...,m \}$. Since the resolvent mapping $R_{\lambda_{i},f}$ is a strongly quasi-nonexpansive $\Delta$-demiclosed mapping such that $Fix(R_{\lambda_{i},f}) = \underset{X}{\argmin} f$ for all $i \in \{ 1,2,...,m \}$. By Lemma \ref{lem1}, we have $\bigcap\limits_{i=1}^m  Fix(R_{\lambda_{i},f_i}) = Fix (R_{\lambda_{m},f_m} R_{\lambda_{m-1},f_{m-1}} \cdots R_{\lambda_{1},f_1})$, and $\bigcap\limits_{i=1}^m \underset{X}{\argmin}f_{i} = \underset{X}{\argmin} f$ whenever $\bigcap\limits_{i=1}^m \underset{X}{\argmin}f_{i}$ is nonempty set. Next, we can apply the following corollary to find the solutions of the objective function.\\

\begin{corollary}\label{cor4.2}
    Let $(X, d)$ be a Hadamard space and $f: X \to ( -\infty, +\infty]$ defined by $f:= \sum_{i=1}^m f_i$ where $f_i$ is a proper lsc convex function for all  $i \in \{ 1, 2, ..., m\}.$ . Let $R_{\lambda_i, f_i} : X \to X$ be a resolvent operator of $f_i$ with respect to $\lambda_i > 0$ for all  $i \in \{ 1, 2, ..., m\}.$ 
    If  $\bigcap\limits_{i=1}^m  \argmin_X f_i \neq \emptyset$, then for any initial point  $x_0 \in X$, the sequence defined for each $n \in \mathbb{N}$ by 
    \[    x_n := (R_{\lambda_m, f_m} R_{\lambda_{m-1}, f_{m-1}} \cdots R_{\lambda_1, f_1} )^n(x_0)    \]
is $\Delta$-convergent to an element $x^\ast$ in $\argmin_X f.$
\end{corollary}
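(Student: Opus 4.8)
The plan is to reduce Corollary \ref{cor4.2} to the abstract machinery already assembled in Section 3, exactly as Corollary \ref{cor4.1} reduces the single-function case. First I would recall from Section 2 that for each $i$ the resolvent $R_{\lambda_i,f_i}$ is a strongly quasi-nonexpansive $\Delta$-demiclosed mapping on $X$ with $Fix(R_{\lambda_i,f_i}) = \argmin_X f_i$. The standing hypothesis $\bigcap_{i=1}^m \argmin_X f_i \neq \emptyset$ therefore says precisely that $\bigcap_{i=1}^m Fix(R_{\lambda_i,f_i}) \neq \emptyset$, so the hypotheses of Proposition \ref{prop1}, Lemma \ref{lem1}, and Corollary \ref{cor3.1} are all met with $S_i := R_{\lambda_i,f_i}$.

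Next I would invoke Corollary \ref{cor3.1} directly: with $S := R_{\lambda_m,f_m}R_{\lambda_{m-1},f_{m-1}}\cdots R_{\lambda_1,f_1}$ and $x_n := S^n(x_0)$, the sequence $\{x_n\}$ is $\Delta$-convergent to some $x^\ast \in \bigcap_{i=1}^m Fix(R_{\lambda_i,f_i})$. (If one prefers to argue from scratch, Proposition \ref{prop1} gives that $S$ is strongly quasi-nonexpansive and orbital $\Delta$-demiclosed with $Fix(S) = \bigcap_{i=1}^m Fix(R_{\lambda_i,f_i})$ by Lemma \ref{lem1}, and then Theorem \ref{thm1} yields the $\Delta$-convergence of the Picard iterates to a point of $Fix(S)$.) Either way, $x^\ast \in \bigcap_{i=1}^m \argmin_X f_i$.

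The only genuinely function-theoretic step — and the one place where this corollary says something beyond Corollary \ref{cor4.1} — is the identification $\bigcap_{i=1}^m \argmin_X f_i = \argmin_X f$ for $f = \sum_{i=1}^m f_i$. The inclusion $\supseteq$ is the nontrivial direction but is still elementary: if $x^\ast$ minimizes each $f_i$, then for any $y \in X$ we have $f(y) = \sum_i f_i(y) \geq \sum_i f_i(x^\ast) = f(x^\ast)$ (with the convention that the sum is $+\infty$ as soon as one term is, which is harmless since each $f_i(x^\ast)$ is finite, $x^\ast$ lying in each $\dom f_i$); the reverse inclusion $\subseteq$ is the one that requires the common-minimizer hypothesis, for which the standard convex-analysis argument applies — if $z \in \argmin_X f$ but $z \notin \argmin_X f_j$ for some $j$, pick a common minimizer $w$ and move along the geodesic from $z$ to $w$: convexity of each $f_i$ forces $f$ to strictly decrease, contradicting minimality of $z$. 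This is noted in the paragraph preceding the statement, so I would either cite it there or spell out the one-line geodesic-convexity argument.

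I do not expect a serious obstacle here: the corollary is a packaging of Proposition \ref{prop1}, Lemma \ref{lem1}, and Theorem \ref{thm1} together with the set equality $\bigcap_i \argmin_X f_i = \argmin_X f$. The only subtlety worth a sentence of care is making sure that the hypothesis $\bigcap_{i=1}^m \argmin_X f_i \neq \emptyset$ is exactly what is needed to guarantee both $\bigcap_i Fix(R_{\lambda_i,f_i}) \neq \emptyset$ (so Proposition \ref{prop1}/Corollary \ref{cor3.1} apply) and the nontriviality of $\argmin_X f$ (so the limit point is meaningful); beyond that the proof is essentially a two-line citation of Corollary \ref{cor3.1}.
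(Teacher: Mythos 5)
Your proposal is correct and follows essentially the same route as the paper: apply Proposition \ref{prop1} and Theorem \ref{thm1} (equivalently, Corollary \ref{cor3.1}) to the resolvents $R_{\lambda_i,f_i}$ to obtain $\Delta$-convergence to a point of $\bigcap_{i=1}^m \argmin_X f_i$, then invoke the identity $\bigcap_{i=1}^m \argmin_X f_i = \argmin_X f$. You in fact justify that identity more carefully than the paper, which merely asserts it; the only slip is that your inclusion labels are swapped --- the direction you need and correctly prove is $\bigcap_{i=1}^m \argmin_X f_i \subseteq \argmin_X f$, the elementary one --- which does not affect correctness.
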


\begin{proof}
    Since $R_{\lambda_i, f_i}$ is a strongly quasi-nonexpansive $\Delta$-demiclosed mapping for all  $i \in \{ 1, 2, ..., m\}$ such that $\bigcap\limits_{i=1}^m  Fix \bigl( R_{\lambda_i, f_i} \bigr) = \bigcap\limits_{i=1}^m \argmin_X f_i \neq \emptyset $, then we have that \[R_{\lambda_m, f_m} R_{\lambda_{m-1}, f_{m-1}} \cdots R_{\lambda_1, f_1}\] is a strongly quasi-nonexpansive orbital $\Delta$-demiclosed mapping by using Proposition  \ref{prop1}. 
    Let $x_0 \in X$ and define a sequence $\{ x_n \} \in X $ by assumption. 
    Then the sequence $\{ x_n \} $ is $\Delta$-convergent to some point $x^\ast$  which is $x^\ast \in Fix(R_{\lambda_m, f_m} R_{\lambda_{m-1}, f_{m-1}} \cdots R_{\lambda_1, f_1} ) = \bigcap\limits_{i=1}^m  Fix \bigl( R_{\lambda_i, f_i} \bigr)$ by using Theorem \ref{thm1}. 
    Therefore, the point $x^\ast \in \bigcap\limits_{i=1}^m \argmin_X f_i$. \\
    As $\bigcap\limits_{i=1}^m \underset{X}{\argmin}f_{i} \neq \emptyset$ by assumption, then $\bigcap\limits_{i=1}^m \underset{X}{\argmin}f_{i} = \underset{X}{\argmin} f$. 
    Hence, the point $x^\ast \in \argmin_X f$.  This completes the prove.   
\end{proof}

\vspace{2mm}
\subsection{Convex Feasibility Problem}

We consider the feasibility problem with each $C_{i}$ being closed and convex. We call this particular case the convex feasibility problem. Let $(X,d)$ be a Hadamard space and let $C_1,C_2,...,C_m$ be nonempty closed convex subsets of $X$. The convex feasibility problem is to find some point
\[ x \in \bigcap\limits_{i=1}^m C_i , \]
when this intersection is nonempty. According to section 2, we know that the metric projection $P_{C_{i}}$ from $X$ onto $C_{i}$ is a strongly quasi-nonexpansive $\Delta$-demiclosed mapping, and $Fix(P_{C_{i}})= C_{i}$ for all $i \in \{ 1,2,...,m \}$. If $\bigcap\limits_{i=1}^{m} C_{i}$ is nonempty, we can use the alternating projection mapping to find the solution to this problem as follows:
\vspace{2mm}

\begin{corollary}\label{cor4.3}
    Let $(X, d)$ be a Hadamard space. 
    If $P_{C_i}$ is the metric projection of $X$ onto nonempty closed and convex subset $C_i \subset X$ for all  $i \in \{ 1, 2, ..., m\}$ with  $\bigcap\limits_{i=1}^m  C_i \neq \emptyset$, 
    then for any initial point  $x_0 \in X$, the sequence defined for each $n \in \mathbb{N}$ by
    \[
    x_n := (P_{C_m} P_{C_{m-1}} \cdots P_{C_1} )^n (x_0)
    \]
    is $\Delta$-convergent to an element $x^\ast$ in $\bigcap\limits_{i=1}^m  C_i$.
\end{corollary}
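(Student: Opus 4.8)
The plan is to obtain this statement as a direct specialization of Corollary \ref{cor3.1}, so that essentially no new work is needed beyond recalling the properties of metric projections collected in Section 2. First I would set $S_i := P_{C_i}$ for each $i \in \{1,2,\dots,m\}$. As recorded in Section 2 (see \cite{Espinola2009410,Kimura,Ariza}), the metric projection onto a nonempty closed convex subset of a Hadamard space is a strongly quasi-nonexpansive $\Delta$-demiclosed mapping with $Fix(P_{C_i}) = C_i$. Hence the family $\{S_i\}_{i=1}^m$ satisfies all the hypotheses of Corollary \ref{cor3.1}, the only remaining one being $\bigcap_{i=1}^m Fix(S_i) = \bigcap_{i=1}^m C_i \neq \emptyset$, which is assumed.

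Next I would apply Proposition \ref{prop1} to conclude that $S := P_{C_m} P_{C_{m-1}} \cdots P_{C_1}$ is itself a strongly quasi-nonexpansive orbital $\Delta$-demiclosed mapping, and invoke Lemma \ref{lem1} to identify $Fix(S) = \bigcap_{i=1}^m Fix(P_{C_i}) = \bigcap_{i=1}^m C_i$, which is nonempty. Then Theorem \ref{thm1} applies to the Picard iteration $x_n := S^n(x_0)$ and yields that $\{x_n\}$ is $\Delta$-convergent to some $x^\ast \in Fix(S)$. Combining the two identifications gives $x^\ast \in \bigcap_{i=1}^m C_i$, which is the desired conclusion. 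Equivalently, one may simply quote Corollary \ref{cor3.1} with $S_i = P_{C_i}$ and read off the same conclusion in one line.

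Since all of the analytic content — the strong quasi-nonexpansiveness and orbital $\Delta$-demiclosedness of the product, together with the $\Delta$-convergence of its Picard orbit — has already been established, I do not expect any substantial obstacle here; the only point requiring care is the bookkeeping identity $Fix(P_{C_m}\cdots P_{C_1}) = \bigcap_{i=1}^m C_i$, which is precisely Lemma \ref{lem1} applied to the projections. I would therefore keep the proof to a few lines: recall the properties of $P_{C_i}$ from Section 2, invoke Proposition \ref{prop1} (or Corollary \ref{cor3.1}) and Theorem \ref{thm1}, and conclude that $x^\ast \in \bigcap_{i=1}^m C_i$.
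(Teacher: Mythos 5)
Your proposal is correct and follows essentially the same route as the paper: the paper's proof also recalls that each $P_{C_i}$ is strongly quasi-nonexpansive and $\Delta$-demiclosed with $Fix(P_{C_i})=C_i$, applies Proposition \ref{prop1} to the product, invokes Theorem \ref{thm1} for the $\Delta$-convergence of the Picard orbit, and uses Lemma \ref{lem1} to identify the limit as an element of $\bigcap_{i=1}^m C_i$. No gaps.
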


\begin{proof}
    Let each $P_{C_i}$ is a strongly quasi-nonexpansive $\Delta$-demiclosed mapping for all $i \in \{ 1, 2, ..., m\}$. 
    By Proposition \ref{prop1}, the product $P_{C_m} \cdots P_{C_1}$ is a strongly quasi-nonexpansive orbital $\Delta$-demiclosed mapping. 
    By Theorem \ref{thm1}, for each $x \in X,$ there exists  $x^\ast = x^\ast(x) \in Fix(P_{C_m} \cdots P_{C_1})$ such that the sequence $\{ (P_{C_m} P_{C_{m-1}} \cdots P_{C_1} )^n (x) \}$ is $\Delta$-convergent to $x^\ast$. 
    Hence, by Lemma \ref{lem1}, we can conclude that  $x^\ast \in \bigcap\limits_{i=1}^m  C_i.$
\end{proof}
Next, we prove the strong convergence alternating projection from  the previous corollary by letting $C_j$ is a compact set for some $j \in \{ 1,2,...,m \}$.
\vspace{3mm}

\begin{corollary}\label{cor4.4}
    Let $(X, d)$ be a Hadamard space. 
    If $P_{C_i}$ is the metric projection of $X$ onto nonempty closed and convex subset $C_i \subset X$ for all  $i \in \{ 1, 2, ..., m\}$ with  $\bigcap\limits_{i=1}^m  C_i \neq \emptyset$, and $C_j$ is a compact set for some  $j \in \{ 1, 2, ..., m\}$
    then for any initial point  $x_0 \in X$, the sequence defined for each $n \in \mathbb{N}$ by    \[    x_n := (P_{C_m} P_{C_{m-1}} \cdots P_{C_1} )^n (x_0)    \]
    is convergent to an element $x^\ast$ in $\bigcap\limits_{i=1}^m  C_i$.    
\end{corollary}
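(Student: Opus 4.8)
The plan is to upgrade the $\Delta$-convergence already furnished by Corollary \ref{cor4.3} to genuine metric convergence, using the compactness of $C_j$ to produce a strongly convergent subsequence of $\{x_n\}$ and then exploiting the monotonicity of $n\mapsto d(x_n,x^\ast)$ at a common fixed point.

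First I would collect what the earlier results give. Write $S:=P_{C_m}P_{C_{m-1}}\cdots P_{C_1}$. By Corollary \ref{cor4.3} the sequence $\{x_n\}$ is $\Delta$-convergent to some $x^\ast\in\bigcap_{i=1}^m C_i=Fix(S)$ (the last identity by Lemma \ref{lem1}). Since $S$ is quasi-nonexpansive (Lemma \ref{lem2}) and $x^\ast\in Fix(S)$, the sequence $\{d(x_n,x^\ast)\}$ is nonincreasing, hence convergent; in particular $d(x_n,x^\ast)-d(S(x_n),x^\ast)\to 0$ and $\{x_n\}$ is bounded. Next I introduce the partial product up to the compact factor, $T:=P_{C_j}P_{C_{j-1}}\cdots P_{C_1}$, which is strongly quasi-nonexpansive by Lemma \ref{lem2} because $\bigcap_{i=1}^{j}Fix(P_{C_i})\supseteq\bigcap_{i=1}^m C_i\neq\emptyset$. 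Put $v_n:=T(x_n)\in C_j$. Writing $S(x_n)=P_{C_m}\cdots P_{C_{j+1}}(v_n)$ and using that each $P_{C_i}$ is quasi-nonexpansive with $x^\ast$ fixed, one has $d(S(x_n),x^\ast)\le d(v_n,x^\ast)\le d(x_n,x^\ast)$, which squeezes $d(x_n,x^\ast)-d(v_n,x^\ast)\to 0$; since $T$ is strongly quasi-nonexpansive and $\{x_n\}$ is bounded, this yields $d(x_n,v_n)\to 0$.

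Then I would extract and conclude. As $\{v_n\}$ lies in the compact set $C_j$, some subsequence $v_{n_r}$ converges in metric to a point $w\in C_j$, and from $d(x_{n_r},v_{n_r})\to 0$ it follows that $x_{n_r}\to w$ as well. But $\{x_{n_r}\}$, being a subsequence of a $\Delta$-convergent sequence, is $\Delta$-convergent to $x^\ast$, while metric convergence forces $x_{n_r}\xrightarrow{\Delta}w$; by uniqueness of the $\Delta$-limit, $w=x^\ast$. Hence $d(x_{n_r},x^\ast)\to 0$, and since $\{d(x_n,x^\ast)\}$ is nonincreasing with a subsequence tending to $0$, the whole sequence $\{d(x_n,x^\ast)\}$ tends to $0$. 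Therefore $\{x_n\}$ converges (in the metric) to $x^\ast$, and $x^\ast\in\bigcap_{i=1}^m C_i$.

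The one genuinely delicate point is establishing $d(x_n,v_n)\to 0$ for an \emph{interior} index $j$: the argument must pass to the partial product $T$, invoke its strong quasi-nonexpansiveness via Lemma \ref{lem2}, and combine this with the squeeze coming from the nested quasi-nonexpansive inequalities; when $j=m$ this step is trivial since then $v_n=S(x_n)=x_{n+1}$. Everything else reduces to standard facts already recorded in the preliminaries (uniqueness of $\Delta$-limits, passage to subsequences, and monotonicity of the distance to a fixed point).
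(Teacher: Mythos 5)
Your proposal is correct, but it reaches the conclusion by a genuinely different route than the paper. The paper argues in two cases: when the compact factor is $C_m$ it notes that the whole orbit $\{x_n\}$ already lies in $C_m$, extracts from every subsequence a metrically convergent further subsequence, and identifies every such limit with the $\Delta$-limit $x^\ast$ from Corollary \ref{cor4.3}; when the compact factor sits at an interior index $j$ it cyclically permutes the product to $\hat P := P_{C_j}\cdots P_{C_1}P_{C_m}\cdots P_{C_{j+1}}$, whose orbit lies in the compact set $C_j$, applies the first case to $\hat P$ started at $P_{C_j}\cdots P_{C_1}(x_0)$, and then transfers strong convergence back to $P^n(x_0)$ using only the quasi-nonexpansiveness of the remaining factors. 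You instead give a single unified argument built on the partial product $T=P_{C_j}\cdots P_{C_1}$: Lemma \ref{lem2} makes $T$ strongly quasi-nonexpansive, the monotonicity of $d(x_n,x^\ast)$ plus the nested inequalities squeeze out $d(x_n,x^\ast)-d(T(x_n),x^\ast)\to 0$, hence $d(x_n,T(x_n))\to 0$, and compactness of $C_j$ applied to $\{T(x_n)\}$ together with uniqueness of $\Delta$-limits finishes the job. All the individual steps check out (in particular $x^\ast\in Fix(T)$ by Lemma \ref{lem1}, which is what legitimizes invoking strong quasi-nonexpansiveness of $T$ at $x^\ast$). What your route buys is the elimination of the case split and of the permutation bookkeeping, plus the asymptotic regularity $d(x_n,T(x_n))\to 0$ as a by-product, and a cleaner final upgrade (a nonincreasing sequence with a null subsequence converges to zero) in place of the paper's subsequence-of-a-subsequence argument; what the paper's route buys is lighter machinery, since it never needs strong quasi-nonexpansiveness of a partial product, only plain quasi-nonexpansiveness of the factors.
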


\begin{proof}
    First, We will divide into two cases of $C_i$ be a compact set for some $i \in \{1, . . . ,m\}$.

Case I: Suppose that $C_m$ is a compact set. 
Let the starting point $x_0 = x$ for some element $x \in X$ and $x_n := (P_{C_m} P_{C_{m-1}} \cdots P_{C_1} )^n (x)$ for all $n \in \mathbb{N}$. By Corollary \ref{cor4.3}, we have that the sequence $\{x_n\}$ is $\Delta$-convergent to $x^\ast$ for some $x^\ast(x) \in \bigcap\limits_{i=1}^m  C_i$.
Then, every subsequence $\{x_{n_k}\}$ of $\{x_n\}$ is $\Delta$-convergent to $x^\ast$. Since $C_m$ is a compact set and $\{x_n\} \subset C_m$. Then for every subsequence $\{x_{n_k}\}$  of $\{x_n\}$ there exists $\{x_{n_{k_l}}\}$ subsequence of $\{x_{n_k}\}$ such that $\lim\limits_{l \to \infty} x_{n_{k_l}} = x^\ast$, this implies that $\lim\limits_{n \to \infty} x_{n} = x^\ast$.

Case II: Suppose that $C_i$ is a compact set for some $i \in \{1, . . . ,m-1\}$.
Next we define $\hat{P}:= P_{C_i} P_{C_{i-1}} \cdots P_{C_1} P_{C_m} P_{C_{m-1}} \cdots P_{C_{i+1}} $ 
and $P :=P_{C_m} P_{C_{m-1}} \cdots P_{C_1}$. By case I, we have
$\lim\limits_{n \to \infty} (\hat{P})^n(P_{C_i} P_{C_{i-1}} \cdots P_{C_1}) (x)= x^\ast$ which  $x^\ast \in \bigcap\limits_{i=1}^m  C_i$ for any  $x \in X.$
Observe that 
\begin{align*}
  P^n(x) 
  &= (P_{C_m} P_{C_{m-1}} \cdots P_{C_1})^n(x) \\
  &= (P_{C_m} P_{C_{m-1}} \cdots P_{C_{i+1}})
(\hat{P})^{n-1}(P_{C_i} P_{C_{i-1}} \cdots P_{C_1} )(x)
\end{align*}
and then $d(P^n(x), x^\ast) \leq d((\hat{P})^n(P_{C_i}P_{C_{i-1}} \cdots P_{C_1} )(x), x^\ast) \to 0$ as $n \to \infty$. 
Therefore, we can conclude that $\lim\limits_{n \to \infty}
(P_{C_m} P_{C_{m-1}} \cdots P_{C_1} )^n(x) = x^\ast.$
\end{proof}

\section{Conclusions}

Let $(X,d)$ be a Hadamard space, and let $S_i : X \to X$ be a strongly quasi-nonexpansive mapping for all $i \in \{1, 2, . . . ,m\}$ such that $\bigcap\limits_{i=1}^m  Fix (S_i) \neq \emptyset$. Firstly, in Lemma (\ref{lem1}) we prove that $ Fix (S_m S_{m-1} \cdots S_1) = \bigcap\limits_{i=1}^m  Fix (S_i)$. Next, we have shown that the product of strongly quasi-nonexpansive $\Delta$-demiclosed mappings is a strongly quasi-nonexpansive orbital $\Delta$-demiclosed mapping in Hadamard spaces (Proposition \ref{prop1}). We have also proved the $\Delta$-convergence theorem for approximating a common fixed point of infinite products of strongly quasi-nonexpansive orbital $\Delta$-demiclosed mappings in Hadamard spaces (Theorem \ref{thm1}). Our results have some applications in convex function minimization (Corollary \ref{cor4.1}), the minimization of the sum of finitely many convex functions (Corollary \ref{cor4.2}), and the convex feasibility problem for finitely many sets (Corollary \ref{cor4.3} and \ref{cor4.4}) in Hadamard spaces.

\section*{Acknowledgments}

The authors acknowledge the financial support provided by the Center of Excellence in Theoretical and Computational Science (TaCS-CoE), King Mongkut's University of Technology Thonburi.
The first author was supported by the Petchra Pra Jom Klao Ph.D. Research Scholarship (Grant No. 51/2562) from KMUTT.

\end{document}